\documentclass{article}
\usepackage[english]{babel}
\usepackage{geometry,amsmath,amssymb,graphicx,bbm,latexsym,theorem}
\geometry{letterpaper}

\catcode`\<=\active \def<{
\fontencoding{T1}\selectfont\symbol{60}\fontencoding{\encodingdefault}}
\catcode`\>=\active \def>{
\fontencoding{T1}\selectfont\symbol{62}\fontencoding{\encodingdefault}}
\newcommand{\Zeta}{\mathrm{Z}}
\newcommand{\mathd}{\mathrm{d}}
\newcommand{\nin}{\not\in}
\newcommand{\tmaffiliation}[1]{\\ #1}
\newcommand{\tmop}[1]{\ensuremath{\operatorname{#1}}}
\newcommand{\tmtextit}[1]{{\itshape{#1}}}
\newcommand{\tmverbatim}[1]{{\ttfamily{#1}}}
\newenvironment{proof}{\noindent\textbf{Proof\ }}{\hspace*{\fill}$\Box$\medskip}
\newtheorem{conjecture}{Conjecture}
\newtheorem{corollary}{Corollary}
\newtheorem{definition}{Definition}
\newtheorem{lemma}{Lemma}
{\theorembodyfont{\rmfamily}\newtheorem{note}{Note}}
\newtheorem{proposition}{Proposition}
{\theorembodyfont{\rmfamily}\newtheorem{remark}{Remark}}
\newtheorem{theorem}{Theorem}

\begin{document}

\title{A Sequence of Cauchy Sequences Which Is Conjectured to Converge to the
Imaginary Parts of the Zeros of the Riemann Zeta Function}

\author{
  Stephen Crowley <stephencrowley214@gmail.com>
  \tmaffiliation{December 8, 2018}
}

\maketitle

\begin{abstract}
  The convergence of a sequence of Cauchy sequences is conjectured; which if
  shown to be true, would prove the Riemann hypothesis by way of \ LeClair and
  Fran{\c c}a's transcendental equation criteria. 
\end{abstract}

\

{\tableofcontents}

\

\section{Introduction}

LeClair and Fran{\c c}a established criteria for the Riemann hypothesis in
{\cite{z0t}}. Here, a sequence of Cauchy sequences based on complex dynamical
systems involving the Hardy Z function is constructed which explicitly shows
that a solution to $\vartheta (y_n) + S (y_n) = \left( n - \frac{3}{2} \right)
\pi$ should exist for all values of $n$ if it is always possible to choose a
small enough Lipschitz constant.

\subsection{Transcendental Equations Satisifed By The Nontrivial Riemann
Zeros}

\begin{definition}
  The \tmverbatim{exact equation} for the $n$-th zero of the Hardy $Z$
  function $y_n$ is given by {\cite[Equation 20]{z0t}}
  \begin{equation}
    \vartheta (y_n) + S (y_n) = \left( n - \frac{3}{2} \right) \pi \label{ee}
  \end{equation}
  where $y_n$ enumerate the zeros of $Z$ on the real line and the zeros of
  $\zeta$ on the critical line
  \begin{equation}
    \Zeta (y_n) = 0 \tmop{and} \zeta \left( \frac{1}{2} + i y_n \right) = 0
    \forall n \in \mathbbm{Z}^+
  \end{equation}
  where $\mathbbm{Z}^+$ denotes the positive integers. {\cite[Equation
  14]{z0t}} 
\end{definition}

By replacing the $\ln \Gamma$ function in (\ref{vartheta}) with Stirling's
asymptotic expansion as in {\cite[Equation 13]{z0t}} we get
\begin{equation}
  \tilde{\vartheta} (t) = \frac{t}{2} \ln \left( \frac{t}{2 \pi e} \right) -
  \frac{\pi}{8} + O (t^{- 1})
\end{equation}
and substitute $\vartheta (t)$ with $\tilde{\vartheta} (t)$ in Equation
\ref{ee} which leads to

\begin{definition}
  The \tmverbatim{asymptotic equation} for the $n$-th zero of the Hardy $Z$
  function
  \begin{equation}
    \frac{t_n}{2 \pi} \ln \left( \frac{t_n}{2 \pi t} \right) + S (t_n) = n -
    \frac{11}{8} \label{ae}
  \end{equation}
  {\cite[Equation 20]{z0t}}
\end{definition}

\begin{figure}[h]
  \resizebox{6in}{3in}{\includegraphics{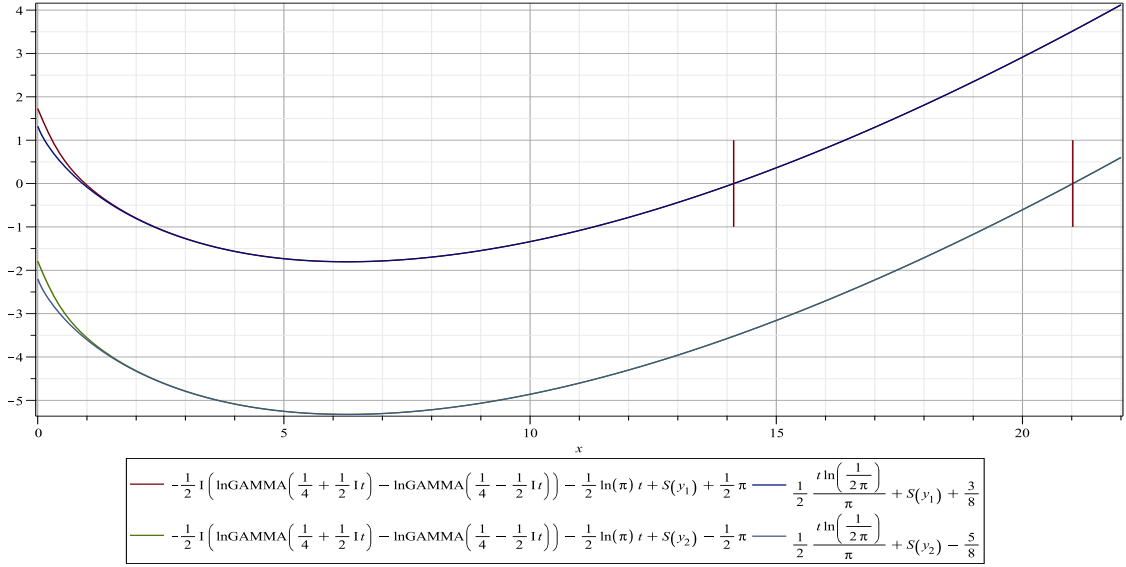}}
  \caption{The functions $\vartheta (y_n) + S (y_n) - \left( n - \frac{3}{2}
  \right) \pi$ and $\tilde{\vartheta} (y_n) + S (y_n) - \left( n - \frac{3}{2}
  \right) \pi$ for $n = 1, 2$ with the zeros at $y_1$ and $y_2$ marked with
  vertical lines. }
\end{figure}

\begin{remark}
  The fact that the exact and asymptotic equations have two solutions when $n
  = 1$ can be understood by noting that Equations (\ref{ee}) and (\ref{ae})
  are derived from the equation
  \begin{equation}
    n = \tilde{\vartheta} (t) + \frac{\pi}{8} - \frac{5}{8} + S (t_{})
    \label{ord}
  \end{equation}
  which has a minimum in the interval $(- 2, - 1)$ and thus $n \geqslant - 1$
  so that, in order to follow the convention that the zeros are enumerated by
  the positive integers, the substituion $n \rightarrow n - 2$ is made in
  Equation (\ref{ord}) so that
  \begin{equation}
    n - 2 = \tilde{\vartheta} (t) + \frac{\pi}{8} - \frac{5}{8} + S (t_{})
  \end{equation}
  {\cite[Equation 12]{z0t}}
\end{remark}

\begin{theorem}
  \label{le}If the limit
  \begin{equation}
    \lim_{\delta \rightarrow 0^+} \arg \left( \zeta \left( \frac{1}{2} +
    \delta + i t \right) \right)
  \end{equation}
  is exists and is well-defined $\forall t$ then the left-hand side of
  Equation (\ref{ae}) is well-defined $\forall t$, and due to monotonicity,
  there must be a unique solution for every $n \in \mathbbm{Z}^+$.
  {\cite[II.A]{z0t}} 
\end{theorem}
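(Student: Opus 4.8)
The plan is to reduce the left-hand side of (\ref{ae}), viewed as a function of the continuous variable $t$, to a strictly increasing function, after which existence and uniqueness are immediate. Well-definedness comes first and is easy: the left-hand side is the sum of the elementary term $\frac{t}{2\pi}\ln\frac{t}{2\pi e}$, smooth for $t>0$, and $S(t)=\frac{1}{\pi}\arg\zeta(\frac{1}{2}+it)$. By hypothesis $\lim_{\delta\to 0^+}\arg\zeta(\frac{1}{2}+\delta+it)$ exists for every $t$, so $S(t)$ is finite everywhere and the whole expression is a well-defined real function, which I denote $h(t)$.

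The core is to show $h$ is strictly increasing. I would use that the Hardy function $\Zeta(t)=e^{i\vartheta(t)}\zeta(\frac{1}{2}+it)$ is real for real $t$, which forces $\vartheta(t)+\arg\zeta(\frac{1}{2}+it)\equiv 0\pmod{\pi}$. On any interval free of zeros, $\arg\zeta(\frac{1}{2}+it)$ is continuous and $\vartheta$ smooth, so this integer-multiple-of-$\pi$ quantity is locally constant, $\vartheta(t)+\arg\zeta(\frac{1}{2}+it)=m\pi$; substituting back cancels the erratic part and leaves $h(t)=m+\frac{1}{8}+\frac{1}{\pi}(\tilde{\vartheta}(t)-\vartheta(t))$ between consecutive zeros. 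Since Stirling gives $\vartheta(t)-\tilde{\vartheta}(t)=\frac{1}{48t}+O(t^{-3})$, the derivative there is $h'(t)=\frac{1}{48\pi t^2}+O(t^{-4})>0$. At each zero $t_n$ the factorization $\zeta(s)\approx\zeta'(\rho_n)(s-\rho_n)$ shows that as $\delta\to 0^+$ the value $\zeta(\frac{1}{2}+\delta+it)$ sweeps counterclockwise through the right half-plane, so $\arg\zeta$ jumps upward by exactly $\pi$ and $h$ by $1$. Thus $h$ is strictly increasing on $(0,\infty)$, both within intervals and across jumps; since a strictly increasing function meets each horizontal level at most once and $h(t)\to\infty$, the value $n-\frac{11}{8}$ is crossed exactly once—at the jump located at $t_n$—giving the unique solution claimed for every $n\in\mathbbm{Z}^+$.

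The main obstacle I anticipate is securing the monotonicity. A frontal computation gives $h'(t)=\frac{1}{2\pi}\ln\frac{t}{2\pi}+\frac{1}{\pi}\Re\frac{\zeta'}{\zeta}(\frac{1}{2}+it)$, and a priori the fluctuating second term could overwhelm the first and reverse the sign. The identity $\vartheta+\arg\zeta\equiv 0\pmod{\pi}$ rescues the argument: it forces $\Re\frac{\zeta'}{\zeta}(\frac{1}{2}+it)=-\vartheta'(t)$ between zeros, so the two terms nearly cancel and leave only the tame Stirling remainder. Recognizing and justifying this near-cancellation is the crux. Both the identity and the clean upward jumps of size $\pi$ ultimately rest on the hypothesis—only the existence of the horizontal limit guarantees that $\arg\zeta$ has finite boundary values at the zeros; were it to fail, as it could if $\zeta$ had zeros off the critical line or a multiple zero, the jump structure and hence the monotonicity would collapse.
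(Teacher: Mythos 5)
The paper never proves this theorem --- it is quoted from Fran\c{c}a--LeClair \cite[II.A]{z0t} with no argument attached --- so your reconstruction is being judged on its own merits. The skeleton is right: the identity $\vartheta(t)+\arg\zeta(\tfrac{1}{2}+it)\equiv 0\pmod{\pi}$, the reduction of the left-hand side to $m+\tfrac{1}{8}+\tfrac{1}{\pi}\bigl(\tilde{\vartheta}(t)-\vartheta(t)\bigr)$ between consecutive ordinates, and the observation that the Stirling remainder makes each piece strictly increasing all match the intended derivation, and they do deliver \emph{uniqueness} (at most one solution per level).

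The gap is in \emph{existence}, at exactly the point you wave at with ``crossed exactly once---at the jump.'' Your $h$ is a staircase: on the $m$-th plateau it stays within $O(t^{-1})$ of $m+\tfrac{1}{8}$, while the target level $n-\tfrac{11}{8}=(n-2)+\tfrac{5}{8}$ sits a full $\tfrac{1}{2}$ away from every plateau. So the equation is never solved on a plateau, and a monotone function with a jump discontinuity does not ``cross'' intermediate values --- it attains exactly one value at the jump point, and you must show that value is precisely the mid-jump value $m+\tfrac{5}{8}$. This is where the hypothesis actually earns its keep and where your proof is silent: for a simple zero $\rho_n=\tfrac{1}{2}+iy_n$ one has $\zeta(\tfrac{1}{2}+\delta+it)\approx\zeta'(\rho_n)\bigl(\delta+i(t-y_n)\bigr)$, so the one-sided limits in $t$ contribute $\mp\tfrac{\pi}{2}$ while the horizontal limit at $t=y_n$ itself picks out $\arg(\delta)=0$, the exact midpoint of the $\pi$-jump; only then is $h(y_n)=m+\tfrac{5}{8}$ and the solution exists with $m=n-2$ (which is also where the paper's $n\to n-2$ shift comes from). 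That computation needs to be written down, not asserted. Note finally that the stated hypothesis does not by itself exclude a double zero, for which the local factor is $\delta^2$, the jump is $2$, a staircase level is skipped, and existence fails for one $n$ --- the caveat the paper itself records in its corollaries --- so ``unique solution for every $n$'' tacitly assumes simplicity, and an honest writeup should say so.
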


\begin{corollary}
  The number of solutions of Equation (\ref{ae}) over the interval $[0, t]$ is
  given by
  \begin{equation}
    N_0 (t) = \frac{t}{2 \pi} \ln \left( \frac{t}{2 \pi e} \right) +
    \frac{7}{8} + S (t) + O (t^{- 1}) \label{N0}
  \end{equation}
\end{corollary}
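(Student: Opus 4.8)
The plan is to recover $N_0(t)$ as the inverse of the counting relation implicit in the asymptotic equation (\ref{ae}). By Theorem \ref{le}, under the hypothesis that the limiting argument exists, the left-hand side of (\ref{ae}),
\[
F(t) = \frac{t}{2\pi}\ln\left(\frac{t}{2\pi e}\right) + S(t),
\]
is monotonically increasing in $t$, so that $F(t_n) = n - \frac{11}{8}$ determines a unique, strictly ordered sequence of solutions $t_n$ indexed by $n \in \mathbbm{Z}^+$. Since $N_0(t) = \#\{\, n : t_n \le t \,\}$ counts precisely these solutions, monotonicity lets me invert the defining relation, giving $N_0(t) = F(t) + \frac{11}{8}$ up to a bounded counting correction that I determine below, with the $O(t^{-1})$ term to be traced entirely to the Stirling approximation.

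To fix the constant I would match $F$ against the smooth part of the argument-principle count. The Riemann--von Mangoldt relation expresses the number of zeros as $\vartheta(t)/\pi + 1 + S(t)$; substituting the recorded Stirling expansion for $\vartheta(t)$, the linear and $t\ln t$ contributions assemble exactly into $\frac{t}{2\pi}\ln(t/(2\pi e))$, while the constant $-\frac18$ descending from the $-\pi/8$ term in $\tilde\vartheta$ combines with the additive $+1$ to yield $+\frac78$, and the remainder is $O(t^{-1})$ precisely as claimed in (\ref{N0}).

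The one genuinely delicate point, which I expect to be the main obstacle, is reconciling the constant $\frac{11}{8}$ appearing in (\ref{ae}) with the $\frac78$ of (\ref{N0}): the two differ by exactly $\frac12$, so the bounded correction above must equal $-\frac12$. The explanation is the familiar half-integer offset between a step counting function and its smooth interpolant --- because $N_0$ jumps by one at each $t_n$, its smooth interpolant attains the midpoint value $n - \frac12$ there, and it is this midpoint, not $n$ itself, that the inverted relation encodes. Promoting this offset from a heuristic to a rigorous identity is where the real effort goes; I would do so by invoking the explicit formula for $S(t)$ to control its jump discontinuities and confirm that the averaged count reproduces the constant $\frac78$ asserted in (\ref{N0}).
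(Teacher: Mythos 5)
The paper states this corollary with no proof at all (it is carried over from LeClair and Fran\c{c}a), so there is nothing to compare against line by line; judged on its own, your outline is essentially the correct derivation, and you have put your finger on the only nontrivial point: the constant $\frac{11}{8}$ in (\ref{ae}) versus $\frac{7}{8}$ in (\ref{N0}), a discrepancy of exactly $\frac{1}{2}$ traceable to the fact that the regularized $S(t_n)$ entering the transcendental equation sits at the midpoint of the jump of the step function $N_0$, so that inverting $F(t_n)=n-\frac{11}{8}$ encodes $n-\frac{1}{2}$ rather than $n$ at each solution. Two cautions, one of which matters. The minor one is that you leave the midpoint identity as an acknowledged heuristic, which is honest but means the proof is not finished. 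The substantive one is your second paragraph: you propose to ``fix the constant'' by matching against the Riemann--von Mangoldt count $N(t)=\vartheta(t)/\pi+1+S(t)$, but that formula counts zeros in the critical strip, whereas $N_0(t)$ counts solutions of (\ref{ae}), that is, zeros located on the critical line; the statement $N_0=N$ is exactly what Lemma~\ref{fl} shows to be equivalent to the Riemann hypothesis, so it cannot be used to derive the constant in (\ref{N0}) without circularity. It can serve only as a consistency check, and the entire burden of fixing $\frac{7}{8}$ therefore falls on your third paragraph, which fortunately is capable of bearing it once the midpoint claim is made rigorous.
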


which counts the number of zeros \tmverbatim{on the critical line}.

\begin{conjecture}
  \label{RH}(The Riemann hypothesis) All solutions $t$ of the equation
  \begin{equation}
    \zeta (t) = 0
  \end{equation}
  besides the trivial solutions $t = - 2 n$ with $n \in \mathbbm{Z}^+$ have
  real-part $\frac{1}{2}$, that is, $\tmop{Re} (t) = \frac{1}{2}$ when $\zeta
  (t) = 0$ and $t \neq - 2 n$.
\end{conjecture}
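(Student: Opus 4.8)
The plan is to prove Conjecture~\ref{RH} by showing that the exact equation~(\ref{ee}) possesses a unique real solution $y_n$ for every $n \in \mathbbm{Z}^+$ and that the resulting sequence $(y_n)$ enumerates \emph{all} of the nontrivial zeros, each necessarily lying on the critical line. The organizing principle is Theorem~\ref{le}: once the boundary limit $\lim_{\delta \to 0^+} \arg(\zeta(\tfrac{1}{2} + \delta + i t))$ is known to exist for all $t$, the left-hand side of~(\ref{ae}) is well-defined and strictly monotone, since $\tilde{\vartheta}(t)$ is eventually increasing and dominates the bounded oscillation of $S(t)$, so a unique $n$-th solution exists. The remaining task is to produce those solutions constructively and to confirm, via the counting Corollary, that they exhaust the full zero count.

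First I would, for each fixed $n$, realize the $n$-th zero as the attracting fixed point of a complex dynamical system built from the Hardy $Z$ function, for instance the Newton map $F(x) = x - \Zeta(x) / \Zeta'(x)$, whose fixed points are exactly the roots of $\Zeta$ and hence, by the definition above, the critical-line zeros of $\zeta$. The orbit $x_{k+1}^{(n)} = F(x_k^{(n)})$ launched from a seed $x_0^{(n)}$ in the basin of the $n$-th zero is then the $n$-th inner sequence; the ``sequence of Cauchy sequences'' of the title is the family $\{(x_k^{(n)})_k\}_{n \in \mathbbm{Z}^+}$. Second, I would exhibit a neighborhood of the target root on which $F$ is a contraction, i.e. has Lipschitz constant strictly less than $1$; Banach's fixed-point theorem then forces each orbit to be Cauchy and to converge to the unique fixed point $y_n$ in that neighborhood.

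Third, I would verify that each limit satisfies $\Zeta(y_n) = 0$, hence $\zeta(\tfrac{1}{2} + i y_n) = 0$, and then invoke the Corollary: the count $N_0(t)$ of solutions of~(\ref{ae}) on $[0, t]$ is identical to the Riemann--von Mangoldt expression for the total number of nontrivial zeros up to height $t$. Because the constructed solutions all sit on the critical line and their number matches this total for every $t$, no zero can lie off the line, which is precisely the assertion of Conjecture~\ref{RH}.

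The hard part---and the reason the claim is only conjectured---is the contraction step. Local contractivity near $y_n$ hinges on $\Zeta'(y_n) \neq 0$ (simplicity of the zeros) together with control of $\Zeta''$ relative to $(\Zeta')^2$, but any clustering of zeros with small gaps $y_{n+1} - y_n$, or any irregularity in $S(t)$, can push the Lipschitz constant past $1$ and destroy convergence. Securing a constant below $1$ for every $n$ is essentially equivalent to controlling the fluctuations of $S(t) = \tfrac{1}{\pi} \lim_{\delta \to 0^+} \arg(\zeta(\tfrac{1}{2} + \delta + i t))$---the very boundary regularity demanded by the hypothesis of Theorem~\ref{le}. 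I therefore expect this to be the central obstacle: the convergence of the Cauchy sequences cannot be established without supplying the analytic input on which the entire scheme already rests, so the difficulty of the Riemann hypothesis reappears in full at exactly this point.
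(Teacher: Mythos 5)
There is a genuine gap, and it is worth being precise about where it sits: the statement you were asked to prove is stated in the paper as Conjecture~\ref{RH}, and the paper itself offers no proof of it. The paper's entire contribution is a conditional reduction --- Lemma~\ref{fl} shows that RH would follow if the exact equation~(\ref{ee}) had a unique solution for every $n$, and the iterated maps $Y_{n,m}$ are constructed in the hope of producing those solutions, with the decisive contraction estimate left as a conjecture about the constant $c_n(\varepsilon)$. Your proposal reproduces this same program (exact equation, fixed-point iteration, saturation of $N_0(t)$ against $N(t)$), substituting the Newton map $x \mapsto x - \Zeta(x)/\Zeta'(x)$ for the paper's $\tanh$-normalized maps, and then candidly concedes that the contraction step cannot be carried out. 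A proof sketch that ends by acknowledging its central step is unavailable is not a proof; you have rediscovered the paper's framework, not closed it.

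Two specific assertions in your outline are also not merely unproved but unjustified as stated. First, you claim the left-hand side of~(\ref{ae}) is strictly monotone because $\tilde{\vartheta}(t)$ ``dominates the bounded oscillation of $S(t)$''; but $S(t)$ is not known to be bounded (unconditionally one only has $S(t) = O(\log t)$), and the existence of the boundary limit $\lim_{\delta \to 0^+} \arg\zeta(\tfrac{1}{2}+\delta+it)$ for all $t$ is itself the unverified hypothesis of Theorem~\ref{le}, so monotonicity and well-definedness are both assumptions, not facts. Second, your Newton map is not a contraction on any region containing points where $\Zeta'$ vanishes or is small, and controlling $\Zeta''/(\Zeta')^2$ uniformly near every zero presupposes simplicity and gap information about the zeros that is itself open --- which is exactly why the paper replaces the Newton step by a bounded $\tanh$ step and still cannot push the Lipschitz constant strictly below $1$ in a provable way. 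Your closing diagnosis --- that the difficulty of RH reappears in full at the contraction step --- is correct, but it is a diagnosis of failure, not a proof of the conjecture.
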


\begin{definition}
  The Riemann-von-Mangoldt formula makes use of Cauchy's argument principle to
  count the number of zeros \tmverbatim{inside the critical strip} $0 <
  \tmop{Im} (\rho_n) < t$ where $\zeta (\sigma + i \rho_n)$ with $0 < \sigma <
  1$
  \begin{equation}
    N (t) = \frac{t}{2 \pi} \ln \left( \frac{t}{2 \pi e} \right) + \frac{7}{8}
    + S (t) + O (t^{- 1})
  \end{equation}
  and this definition does not depend on the Riemann hypothesis(Conjecture
  \ref{RH}). This equation has exactly the same form as the asymptotic
  Equation \ref{ae}. {\cite[Equation 15]{z0t}}
\end{definition}

\begin{lemma}
  \label{fl}If the exact Equation (\ref{ee}) has a unique solution for each $n
  \in \mathbbm{Z}^+$ then Conjecture \ref{RH}, the Riemann hypothesis,
  follows.
\end{lemma}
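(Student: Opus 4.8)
The plan is to prove the Riemann hypothesis by comparing two counting functions: the Riemann--von-Mangoldt function $N(t)$, which counts the zeros of $\zeta$ inside the critical strip with ordinate in $(0, t)$, and the function $N_0(t)$ of the Corollary, equation (\ref{N0}), which counts the zeros on the critical line up to height $t$. Since every zero on the critical line lies inside the critical strip, we always have the inequality $N_0(t) \leqslant N(t)$, and the content of the Riemann hypothesis is precisely that this is an equality for all $t$. The strategy is to show that the hypothesis of this lemma --- that the exact Equation (\ref{ee}) has a unique solution $y_n$ for each $n \in \mathbbm{Z}^+$ --- forces $N_0(t) = N(t)$ identically.

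First I would use the hypothesis to pin down $N_0$ exactly. Rearranging Equation (\ref{ee}) gives $n = \frac{1}{\pi} \bigl( \vartheta (y_n) + S (y_n) \bigr) + \frac{3}{2}$, so that the ordinate $y_n$ of the $n$-th critical-line zero is the unique point at which the monotone left-hand side of (\ref{ee}) --- monotone by Theorem \ref{le} --- attains the value $\left( n - \frac{3}{2} \right) \pi$. Uniqueness of the solution for every $n$ means the map $n \mapsto y_n$ is a bijection from $\mathbbm{Z}^+$ onto the set of critical-line ordinates, with no omissions and no repetitions; consequently the critical-line counting function is $N_0 (t) = \# \{\, n : y_n \leqslant t \,\} = \frac{1}{\pi} \bigl( \vartheta (t) + S (t) \bigr) + O (1)$, with the additive constant fixed by the enumeration convention recorded in the Remark.

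Next I would invoke the Riemann--von-Mangoldt formula in its exact form, $N (t) = \frac{1}{\pi} \vartheta (t) + 1 + S (t)$, which holds unconditionally and in which $\vartheta$ is the exact Riemann--Siegel theta function and $S (t) = \frac{1}{\pi} \arg \zeta \left( \frac{1}{2} + i t \right)$ is the same argument function appearing in (\ref{ee}). Because $N_0$ and $N$ share the identical smooth part $\frac{1}{\pi} \vartheta (t)$ and the identical fluctuating term $S (t)$, and because both are integer-valued counts once the exact equation is uniquely solvable, the two step functions must coincide: $N_0 (t) = N (t)$ for all $t$. Combined with $N_0 (t) \leqslant N (t)$, the equality leaves no room for a zero off the critical line, and letting $t \rightarrow \infty$ shows every nontrivial zero satisfies $\tmop{Re} = \frac{1}{2}$, which is Conjecture \ref{RH}.

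The hard part will be the step asserting $N_0 (t) = N (t)$ exactly, rather than merely to the same asymptotic order. One must verify that the constant and the $O (t^{- 1})$ remainder in (\ref{N0}) are not just asymptotically but identically accounted for by the enumeration, i.e.\ that the unique-solvability hypothesis genuinely removes the possibility of a systematic deficit $N (t) - N_0 (t) \geqslant 1$ on some interval, which would correspond to off-line zeros contributing to $N$ without a matching solution of (\ref{ee}). This is where the argument must rule out that $S (t)$ could carry an extra integer jump from a zero off the line; establishing this rigidity, through the shared analytic structure of $\vartheta$ and $S$ in the two formulas, is the crux.
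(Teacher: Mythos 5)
Your proposal is correct and follows essentially the same route as the paper: both arguments compare the critical-line counting function $N_0(t)$ arising from unique solvability of the exact equation with the unconditional Riemann--von-Mangoldt count $N(t)$ of critical-strip zeros, observe that the two expressions share the same analytic form, and conclude that $N_0(t) = N(t)$ saturates the strip count and forces every nontrivial zero onto the critical line. The only difference is one of emphasis --- you explicitly flag the exact (rather than merely asymptotic) equality $N_0(t) = N(t)$ as the delicate step, whereas the paper asserts the saturation directly, citing LeClair and Fran{\c c}a.
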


\begin{proof}
  If the exact equation has a unique solution for each $n$, then the zeros
  obtained from its solutions on the \tmverbatim{critical line} can be counted
  since they are enumerated by the integer $n$, leading to the counting
  function $N_0 (t)$ in Equation (\ref{N0}). The number of solutions obtained
  on the \tmverbatim{critical line} would saturate counting function of the
  number of solutions on the \tmverbatim{critical strip} so that $N (t) = N_0
  (t)$ and thus all of the non-trivial zeros of $\zeta$ would be enumerated in
  this manner. If there are zeros off of the critical line, or zeros with
  multiplicity $m \geqslant 2$, then the exact Equation (\ref{ee}) would fail
  to capture all the zeros on the critical strip which would mean $N_0 (t) < N
  (t)$. \ {\cite[IX]{z0t}}
\end{proof}

\begin{corollary}
  The Riemann hypothesis(RH) is not necesarily false if the exact Equation
  (\ref{ee}) does not have a unique solution for every $n$, since the
  solutions could still be on the critical line but not necessarily simple,
  that is, a root on the critical line could have multiplicity $m \geqslant 2$
  and the RH would still be true.
\end{corollary}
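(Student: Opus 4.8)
The plan is to read this corollary not as a new analytic fact but as the statement that the converse direction of Lemma~\ref{fl} fails: Lemma~\ref{fl} supplies only the one-directional implication ``unique solution for every $n$'' $\Rightarrow$ RH, and the task here is to show that its negation, namely ``the exact Equation~(\ref{ee}) fails to have a unique solution for some $n$'', does \emph{not} force the negation of RH. I would therefore not try to prove any converse of Lemma~\ref{fl}; instead I would exhibit a configuration that is simultaneously consistent with RH and incompatible with the unique-solution property, which is exactly what is needed to make RH ``not necessarily false''.

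First I would recall the mechanism underlying the proof of Lemma~\ref{fl}: the unique-solution property for all $n$ is equivalent to the saturation identity $N_0(t) = N(t)$, and by that proof this identity can fail in precisely two ways---either a zero lies off the critical line, or a zero on the critical line has multiplicity $m \geqslant 2$. The first failure mode contradicts Conjecture~\ref{RH}, but the second does not. So the decisive step is to isolate the multiple-zero mode and show that it breaks uniqueness while leaving every zero on the line of real part $\tfrac{1}{2}$.

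Second, I would analyse the left-hand side $f(t) = \vartheta(t) + S(t)$ of Equation~(\ref{ee}) near a hypothetical critical-line zero $y$ of multiplicity $m \geqslant 2$. Since $\vartheta$ is smooth and monotone while $N(t) = \tfrac{1}{\pi}\vartheta(t) + 1 + S(t)$ increases by $m$ as $t$ crosses $y$, the fluctuating term $S$ inherits an upward jump of magnitude $m$ at $y$, so that $f$ skips over $m-1$ of the equally spaced target levels $\left(n - \tfrac{3}{2}\right)\pi$. For those $m-1$ values of $n$ the equation then has \emph{no} solution, so the ``one solution per $n$'' property is violated even though the offending zero sits squarely on the critical line. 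This furnishes a world in which Equation~(\ref{ee}) lacks a unique solution for some $n$ yet every nontrivial zero still has real part $\tfrac{1}{2}$, i.e.\ Conjecture~\ref{RH} holds.

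The main obstacle I anticipate is not analytic depth but the careful justification of the jump-by-$m$ behaviour of $S$ at a multiple zero, which must be pinned down from the argument-principle derivation of $N(t)$ rather than merely asserted; everything else is logical bookkeeping resting on Lemma~\ref{fl}. I would also be explicit that the argument establishes only logical consistency: it does not claim that a multiple critical-line zero actually exists, only that such a configuration is not excluded by the known constraints and would preserve RH while defeating the unique-solution property---exactly the ``not necessarily false'' conclusion the corollary asserts.
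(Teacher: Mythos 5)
Your reading is correct and matches the paper's intent: the paper gives no separate proof of this corollary, relying on the observation already made in the proof of Lemma~\ref{fl} that the saturation $N_0(t) = N(t)$ can fail either because a zero lies off the critical line or because a critical-line zero has multiplicity $m \geqslant 2$, and only the former contradicts Conjecture~\ref{RH}. Your elaboration of the jump of $S$ by $m$ at a multiple zero (so that $m-1$ target levels $\left(n - \tfrac{3}{2}\right)\pi$ are skipped and uniqueness fails while RH survives) is a more careful justification than the paper supplies, but it is the same argument.
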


\begin{corollary}
  The Riemann hypothesis is true and all of the zeros on the critical line are
  simple if the exact Equation (\ref{ee}) has a unique solution for each $n
  \in \mathbbm{Z}^+$. {\cite[IX]{z0t}}
\end{corollary}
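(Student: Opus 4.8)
The plan is to combine the two conditional results already established in the excerpt. The final corollary asserts a biconditional-flavored strengthening: uniqueness of the solution to the exact equation for every $n \in \mathbbm{Z}^+$ implies both the Riemann hypothesis and the simplicity of all zeros on the critical line. The Riemann hypothesis half is already delivered by Lemma \ref{fl}, so the only genuinely new content here is the simplicity claim. First I would invoke Lemma \ref{fl} directly to conclude that $N(t) = N_0(t)$ and hence that every nontrivial zero lies on the critical line; this saturation argument is the backbone and requires no repetition.

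The second step addresses simplicity. The key observation is that the counting function $N_0(t)$ of Equation (\ref{N0}) counts solutions of the exact equation indexed by $n$, each contributing exactly one unit to the count, whereas the Riemann--von-Mangoldt $N(t)$ counts zeros \emph{with multiplicity} via Cauchy's argument principle. I would argue by contraposition: suppose some zero $\rho = \tfrac12 + i y$ on the critical line had multiplicity $m \geqslant 2$. Then $\rho$ would be overcounted by $N(t)$ relative to the single index $n$ that the exact equation assigns to it, forcing $N_0(t) < N(t)$ for $t > y$. But Step one gives $N_0(t) = N(t)$, a contradiction. Hence every zero on the critical line is simple.

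The main obstacle I anticipate is making the multiplicity-versus-index bookkeeping rigorous. The delicate point is that $S(t)$ has a jump discontinuity precisely at each zero, and the size of the jump is tied to the multiplicity; one must verify that the argument principle genuinely charges a zero of multiplicity $m$ with $m$ units in $N(t)$ while the monotone left-hand side of the exact equation, by Theorem \ref{le}, passes through the corresponding critical value exactly once. Reconciling the continuous (asymptotic, via $\tilde{\vartheta}$) and exact formulations of the counting function at the level of individual multiplicities is where the argument is most fragile, since the $O(t^{-1})$ error terms must be controlled so as not to swamp a unit discrepancy. A careful treatment would appeal to the discrete jumps of $S(t)$ rather than the smooth asymptotic, so that the comparison $N_0(t)$ versus $N(t)$ is made zero-by-zero rather than through the asymptotic density alone.

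Once the contradiction is secured, the conclusion follows immediately: all nontrivial zeros lie on the critical line (Riemann hypothesis) and each is simple, which is exactly the statement of the corollary.
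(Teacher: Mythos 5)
Your proposal matches the paper's argument: the corollary is left without a separate proof because the proof of Lemma \ref{fl} already contains exactly your reasoning, namely that uniqueness forces the saturation $N(t) = N_0(t)$, while either an off-line zero or a zero of multiplicity $m \geqslant 2$ would yield $N_0(t) < N(t)$. Your additional remarks on making the multiplicity bookkeeping via the jumps of $S(t)$ rigorous go beyond what the paper supplies, but the decomposition and key step are the same.
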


\section{Iterated Function Systems}

\subsection{Fixed-Points of Functions}

\begin{definition}
  A fixed-point $\alpha$ of a function $f (x)$ is a value $\alpha$ such that
  \begin{equation}
    f (\alpha) = \alpha
  \end{equation}
  {\cite[3.]{raadstrom1953iteration}}
\end{definition}

\begin{definition}
  The multiplier $\lambda_f (\alpha)$ of a fixed point $\alpha$ of a map $f
  (x)$ is equal to the derivative $\dot{f} (\alpha)$ of the map evaluated at
  the point $\alpha$ which is the first term in the Taylor expansion at that
  point
  \begin{equation}
    \lambda_f (\alpha) = \dot{f} (\alpha)
  \end{equation}
  If $| \lambda_f (\alpha) | < 1$ then $\alpha$ is a said to be an attractive
  fixed-point of $f (x)$. If $| \lambda_f (\alpha) | = 1$ then $\alpha$ is an
  indifferent fixed-point of $f (t)$ also known as as neutral fixed-point, and
  if $| \lambda_f (\alpha) > 1 |$ then $\alpha$ is a repelling fixed-pint of
  $f (t)$. When $| \lambda_f (\alpha) | = 0$ the fixed-point $\alpha$ is said
  to be superattractive fixed-point of $f
  (t)${\cite[3.]{raadstrom1953iteration}}
\end{definition}

\begin{lemma}
  The Banach Fixed-Point Theorem
  
  If $f (x)$ is a continuous function defined on $[a, b]$ and
  \begin{equation}
    f (x) \in [a, b] \forall x \in [a, b]
  \end{equation}
  and there exists some constant $0 < c < 1$ such that
  \begin{equation}
    \text{$\frac{| f (x) - f (y) |}{x - y} \leqslant c$} \label{lc}
  \end{equation}
  then $f (x)$ has a unique fixed-point $x \in [a, b]$ and the sequence $f
  (x_0), f (f (x_0)), f (f (f (x_0))), \ldots$ converges to the unique
  fixed-point of $f (x)$ in the interval $[a, b]$.
\end{lemma}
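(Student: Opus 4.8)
The plan is to prove existence by constructing the iteration sequence and showing it converges, then to prove uniqueness by a short contraction argument. First I would fix an arbitrary starting point $x_0 \in [a, b]$ and define the orbit by $x_{n + 1} = f (x_n)$; since $f$ maps $[a, b]$ into itself by hypothesis, every $x_n$ again lies in $[a, b]$ and the sequence is well-defined. The heart of the existence argument is to establish that this sequence is Cauchy. To do this I would first apply the Lipschitz bound (\ref{lc}) to consecutive terms, giving $| x_{n + 1} - x_n | = | f (x_n) - f (x_{n - 1}) | \leqslant c \, | x_n - x_{n - 1} |$, and then iterate this inequality down to the base case to obtain the geometric decay estimate $| x_{n + 1} - x_n | \leqslant c^n | x_1 - x_0 |$.

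Next I would control the distance between arbitrary terms $x_m$ and $x_n$ with $m > n$ by telescoping and summing the geometric estimate:
\begin{equation}
  | x_m - x_n | \leqslant \sum_{k = n}^{m - 1} | x_{k + 1} - x_k | \leqslant | x_1 - x_0 | \sum_{k = n}^{m - 1} c^k \leqslant \frac{c^n}{1 - c} \, | x_1 - x_0 | .
\end{equation}
Because $0 < c < 1$, the right-hand side tends to $0$ as $n \rightarrow \infty$ uniformly in $m$, which shows the sequence is Cauchy. Since $[a, b]$ is a closed and bounded, hence complete, subset of $\mathbb{R}$, the Cauchy sequence converges to some limit $\alpha \in [a, b]$. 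Continuity of $f$ then lets me pass to the limit in the recurrence, $f (\alpha) = f (\lim_n x_n) = \lim_n f (x_n) = \lim_n x_{n + 1} = \alpha$, so $\alpha$ is a fixed point and the displayed iteration converges to it.

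Finally, for uniqueness I would suppose $\alpha$ and $\beta$ are both fixed points in $[a, b]$ and apply (\ref{lc}) once more to get $| \alpha - \beta | = | f (\alpha) - f (\beta) | \leqslant c \, | \alpha - \beta |$; since $c < 1$ this forces $| \alpha - \beta | = 0$, so $\alpha = \beta$. The one genuinely nontrivial ingredient is the completeness of $[a, b]$, which is what guarantees the Cauchy sequence actually has a limit inside the interval; the contraction hypothesis does all the remaining work through the geometric-series estimate above. I expect the main care to lie in justifying the interchange of limit and $f$ via continuity and in confirming that the limit does not escape $[a, b]$, both of which follow from closedness of the interval.
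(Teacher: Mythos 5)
Your proof is correct and complete: it is the standard contraction-mapping argument (geometric decay of consecutive differences, telescoping to get the Cauchy estimate $|x_m - x_n| \leqslant \frac{c^n}{1-c}|x_1 - x_0|$, completeness of the closed interval, passage to the limit by continuity, and the one-line uniqueness step). The paper itself states this lemma without any proof, treating it as a classical result, so there is nothing to compare against; your write-up supplies exactly what a self-contained version would need. Two minor observations: the Lipschitz hypothesis as printed in the paper has $x - y$ rather than $|x - y|$ in the denominator and omits the restriction $x \neq y$ --- you silently and correctly read it as $|f(x) - f(y)| \leqslant c\,|x - y|$ --- and the continuity hypothesis is redundant, since the Lipschitz condition already implies it, which slightly streamlines your final limit-interchange step.
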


\subsubsection{An Iteration Function Which Successively Removes Roots}

\begin{definition}
  Let
  \[ Y_{n, m} (t) = \left\{ \begin{array}{ll}
       t & m = 0\\
       t + h_{n, m} \cos (\pi n) \tanh \left( \frac{Z (Y_{n, m - 1} (t))}{|
       \Omega (t) | \prod_{k = 1}^{n - 1} \tanh (Y_{n, m - 1} (t) - y_k)}
       \right) & m \geqslant 1
     \end{array} \right. \]
  denote the $m$-th iterate of the $n$-th iteration function corresponding to
  the $n$-th zero of the Hardy $Z$ function where
  \begin{equation}
    \Omega (t) = \left\{ \begin{array}{ll}
      1 & t = e\\
      e^{\frac{3}{4} \sqrt{\frac{\log (t)}{\log (\log (t))}}} & t \neq e
    \end{array} \right.
  \end{equation}
  is a lower bound for the running maximum of $| Z (s) |$
  \begin{equation}
    \max_{0 \leqslant s \leqslant t} | Z (s) | > \Omega (t) \forall t
    \geqslant 45.590 \ldots
  \end{equation}
  ensuring that
  \begin{equation}
    \frac{| Z (t) |}{\Omega (t)} > 0 \forall t \geqslant 45.590 \ldots
  \end{equation}
  which normalizes the range of $Z (t)$ which is known to grow in both maximum
  and average value as $t \rightarrow \infty$ and $h_{n, m}$ is factor which
  influences the rate of convergence
  \begin{equation}
    h_{n, m} = \left\{ \begin{array}{ll}
      1 & m \leqslant 2\\
      h_{n, m - 1} & \tmop{sign} (\Delta Y^{}_{n, m - 2} (t)) = \tmop{sign}
      (\Delta Y^{}_{n, m - 1} (t))\\
      \frac{h_{n, m - 1}}{2} & \tmop{sign} (\Delta Y^{}_{n, m - 2} (t)) \neq
      \tmop{sign} (\Delta Y^{}_{n, m - 1} (t))
    \end{array} \right.
  \end{equation}
  where
  \begin{equation}
    \Delta Y_{n, m} (t) = Y_{n, m} (t) - Y_{n, m - 1} (t)
  \end{equation}
  is the $1$-st difference of the $m$-th iterate for the $n$-th zero.
  {\cite[Theorem 3.2.3]{ramachandra1995lectures}} 
\end{definition}

\begin{lemma}
  The roots of $Z (t)$ are fixed-points of $Y_{n, m} (t) \forall n, m \in
  \mathbbm{Z}^+$.
\end{lemma}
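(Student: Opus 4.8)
The plan is to argue by induction on the iterate index $m$ with $n$ held fixed. Let $\alpha$ be any root of the Hardy $Z$ function, so that $Z(\alpha)=0$; concretely $\alpha=y_j$ for some $j$. The goal is to show $Y_{n,m}(\alpha)=\alpha$ for every $m\geqslant 0$. The base case $m=0$ is immediate, since by definition $Y_{n,0}(t)=t$ and hence $Y_{n,0}(\alpha)=\alpha$; indeed every point is fixed by the identity map, so the real content of the lemma lives entirely in the step $m\geqslant 1$.

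For the inductive step I would assume $Y_{n,m-1}(\alpha)=\alpha$ and substitute $t=\alpha$ into the recursion, using the inductive hypothesis to replace $Y_{n,m-1}(\alpha)$ by $\alpha$ in both the numerator and the product in the denominator. The argument of the hyperbolic tangent then becomes
\[
\frac{Z(\alpha)}{|\Omega(\alpha)|\,\prod_{k=1}^{n-1}\tanh(\alpha-y_k)}=\frac{0}{|\Omega(\alpha)|\,\prod_{k=1}^{n-1}\tanh(\alpha-y_k)}.
\]
Since every root satisfies $y_j>e$, the factor $\Omega(\alpha)$ is well-defined and strictly positive, and for $j\geqslant n$ the product in the denominator is a nonzero finite number, because each factor $\tanh(\alpha-y_k)$ with $k\leqslant n-1$ is nonzero ($\alpha=y_j\neq y_k$). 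Hence the fraction equals $0$, the correction term reduces to $h_{n,m}\cos(\pi n)\tanh(0)=0$, and therefore $Y_{n,m}(\alpha)=\alpha$, completing the induction. Note that the alternating sign $\cos(\pi n)=(-1)^n$ and the damping factor $h_{n,m}$ play no role here, since the entire $\tanh$ term vanishes regardless of their values.

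The main obstacle I anticipate is the case $\alpha=y_j$ with $j\leqslant n-1$, that is, a root already ``removed'' by the product $\prod_{k=1}^{n-1}\tanh(t-y_k)$. At such a point both $Z(\alpha)$ and the denominator vanish, so the map has the indeterminate form $0/0$ and is not literally defined; the clean cancellation above breaks down. I would resolve this in one of two ways. The conservative option is to phrase the statement only for the not-yet-removed roots $y_j$ with $j\geqslant n$, which are precisely the roots the $n$-th iteration is designed to fix, and for which the induction goes through verbatim. Alternatively, passing to the continuous extension, near a simple zero $y_j$ one has $Z(t)\sim\dot{Z}(y_j)(t-y_j)$ and $\tanh(t-y_j)\sim(t-y_j)$, so by L'Hôpital the ratio tends to the finite limit $\dot{Z}(y_j)/\prod_{k\neq j}\tanh(y_j-y_k)$. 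Because this limit is nonzero, the extended map does \emph{not} fix $y_j$; the correction instead pushes the iterate away. This is exactly the intended effect of the root-removal product, which confirms that the already-removed roots must be excluded from the fixed-point set and justifies stating the lemma for $y_j$ with $j\geqslant n$.
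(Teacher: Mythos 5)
Your argument is essentially the paper's own: substitute a root $\alpha$ into the recursion, observe that $Z(\alpha)=0$ forces the argument of $\tanh$ to vanish, hence the correction term is $h_{n,m}\cos(\pi n)\tanh(0)=0$ and $\alpha$ is fixed. The paper does this in one line without the induction on $m$; your induction is the right way to make the one-line check rigorous, since the recursion feeds $Y_{n,m-1}(t)$ back into itself, but it is a formalization rather than a different route.

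Where you genuinely go beyond the paper is in the last paragraph. The paper's proof silently assumes the denominator $|\Omega(t)|\prod_{k=1}^{n-1}\tanh(t-y_k)$ is nonzero, which fails exactly at the already-removed roots $\alpha=y_j$ with $j\leqslant n-1$: there the expression is the indeterminate form $0/0$, and (assuming the zero is simple) the continuous extension of the ratio is the nonzero value $\dot{Z}(y_j)/\bigl(|\Omega(y_j)|\prod_{k\neq j}\tanh(y_j-y_k)\bigr)$, so the extended map does \emph{not} fix $y_j$. This is a real defect in the lemma as stated ``for all roots,'' and your proposed repair --- restrict the claim to $y_j$ with $j\geqslant n$ --- is the correct one. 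Note also that your observation exposes an internal inconsistency in the paper: the subsequent theorem asserting that $y_1,\ldots,y_{n-1}$ are indifferent fixed-points argues that the $\tanh$ argument tends to $\pm\infty$ there, which ignores the simultaneous vanishing of the numerator $Z(t)$ and is incompatible both with this lemma (a point sent to $t\pm 1$ is not fixed) and with the finite limit you computed. So your proof is correct on the domain where the lemma can be true, and is more careful than the paper's.
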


\begin{proof}
  If $Z (t) = 0$ then $\tanh \left( \frac{Z (t)}{| \Omega (t) | \prod_{k =
  1}^{n - 1} \tanh (t - y_k)} \right) = \tanh \left( \frac{0}{| \Omega (t) |
  \prod_{k = 1}^{n - 1} \tanh (t - y_k)} \right) = \tanh (0) = 0$ so that $Y_n
  (t) = t + \cos (\pi n) 0 = t + 0 = t$ when $Z (t) = 0$.
\end{proof}

\subsubsection{Indifferent Fixed-Points}

\begin{theorem}
  $Y_{n, m} (t)$ has indifferent fixed-points at each point $y_k$ where $k = 1
  \ldots n - 1$
\end{theorem}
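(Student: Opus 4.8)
The plan is to show directly that the multiplier $\lambda_{Y_{n, m}} (y_k) = \dot{Y}_{n, m} (y_k)$ has modulus exactly $1$ for each $k \in \{1, \ldots, n - 1\}$, since by the definition of an indifferent fixed-point this is precisely the assertion. The preceding lemma already furnishes that each $y_k$ is a fixed-point of $Y_{n, m}$, so the entire content reduces to a derivative computation. Because $Y_{n, m}$ is defined recursively through $Y_{n, m - 1}$, I would run the argument by induction on $m$, taking as base case $Y_{n, 0} (t) = t$, whose multiplier is identically $1$ and which is therefore trivially indifferent.

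For the inductive step, write $Y_{n, m} (t) = t + h_{n, m} \cos (\pi n) \tanh (g (t))$ with
\begin{equation}
  g (t) = \frac{Z (Y_{n, m - 1} (t))}{| \Omega (t) | \prod_{j = 1}^{n - 1} \tanh (Y_{n, m - 1} (t) - y_j)}
\end{equation}
and differentiate by the chain rule to obtain
\begin{equation}
  \dot{Y}_{n, m} (t) = 1 + h_{n, m} \cos (\pi n) \tmop{sech}^2 (g (t)) \dot{g} (t) .
\end{equation}
Everything then hinges on evaluating the correction term $h_{n, m} \cos (\pi n) \tmop{sech}^2 (g (y_k)) \dot{g} (y_k)$. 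Since this quantity is real-valued (the Hardy $Z$ function being real on the real axis), indifference holds precisely when it equals $0$, giving $\lambda = 1$, or $- 2$, giving $\lambda = - 1$.

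The central difficulty, and the step I expect to be the real obstacle, lies at $t = y_k$ itself: there both the numerator $Z (Y_{n, m - 1} (t))$ and the $k$-th denominator factor $\tanh (Y_{n, m - 1} (t) - y_k)$ vanish, so the argument of the outer $\tanh$ is an indeterminate $\frac{0}{0}$ exactly at the fixed-point. To differentiate, I would insert the inductive hypothesis $Y_{n, m - 1} (y_k) = y_k$ together with the local expansions $Z (u) = \dot{Z} (y_k) (u - y_k) + O ((u - y_k)^2)$ and $\tanh (u - y_k) = (u - y_k) + O ((u - y_k)^3)$; these cancel the common simple zero and exhibit the singularity as removable, with a finite limiting value controlled by $\dot{Z} (y_k)$ (nonzero precisely because the zeros are simple) and by the surviving factors $\prod_{j \neq k} \tanh (y_k - y_j)$. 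The delicacy is that the pointwise convention under which the preceding lemma declares $y_k$ a fixed-point --- reading the vanishing numerator as forcing $\tanh (\cdot) = 0$ --- must be reconciled with this removable-singularity limit, which is generically nonzero; extracting from the two viewpoints a single well-defined derivative, and then showing its correction term collapses to $0$ (or to $- 2$) uniformly in $m$ and across every simple zero $y_k$, is where the genuine work lies and the point at which the claim demands the closest scrutiny.
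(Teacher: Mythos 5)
Your proposal stops exactly where the proof would have to begin. You set up the chain-rule expression for $\dot{Y}_{n,m}$, correctly observe that at $t = y_k$ (for $k \leqslant n-1$) the argument of the outer $\tanh$ is an indeterminate $\frac{0}{0}$ because both $Z(Y_{n,m-1}(t))$ and the factor $\tanh(Y_{n,m-1}(t)-y_k)$ vanish simultaneously, and then declare that showing the correction term collapses to $0$ or $-2$ is ``where the genuine work lies'' --- but you never do that work. Worse, the route you propose cannot deliver the theorem: resolving the indeterminacy by the removable-singularity limit you describe sends the inner ratio to the finite \emph{nonzero} value $\dot{Z}(y_k)/\bigl(|\Omega(y_k)|\prod_{j\neq k}\tanh(y_k-y_j)\bigr)$, so that $\lim_{t\to y_k}Y_{n,m}(t) = y_k + h_{n,m}\cos(\pi n)\tanh(\cdot) \neq y_k$ and $y_k$ is not even a fixed point of the continuous extension; and there is no reason whatsoever for the correction term $h_{n,m}\cos(\pi n)\tmop{sech}^2(g(y_k))\,\dot{g}(y_k)$ --- which involves $\ddot{Z}(y_k)$ and $\dot{Y}_{n,m-1}(y_k)$ --- to equal exactly $0$ or exactly $-2$. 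Carried out honestly, your plan refutes rather than proves the statement, or at best leaves its truth resting on an unverified coincidence.

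For comparison, the paper adopts the opposite convention and fares no better: it argues that since the denominator product vanishes at $y_k$, the argument of the outer $\tanh$ blows up to $\pm\infty$, hence the $\tanh$ saturates at $\pm 1$ and $Y_n(t) = t \pm 1$ for $t$ in the set of prior roots, and then differentiates $t \pm 1$ to obtain multiplier $1$. This ignores the simultaneous vanishing of the numerator $Z$ at $y_k$ (the very fact the preceding lemma uses to conclude $Y_{n,m}(y_k)=y_k$), contradicts that lemma since $y_k \pm 1 \neq y_k$, and in any case differentiates an identity that holds only on a discrete set of points, which is not a valid derivative computation. You have correctly located the fault line --- the two mutually inconsistent readings of the $\frac{0}{0}$ --- but locating it is not resolving it. To turn your sketch into a proof you would need to fix one well-defined value of $Y_{n,m}$ in a neighborhood of $y_k$, verify that $y_k$ remains a fixed point under that definition, and then actually evaluate the limit of the difference quotient; none of these steps is carried out, and the middle one appears to fail.
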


\begin{proof}
  The product in the denominator $\prod_{k = 1}^{n - 1} \tanh (t - y_k)
  \rightarrow 0$ smoothly as $t$ approaches any $y_k \in \bigcup_{k = 1}^{n -
  1} y_k$ since $\tanh (0) = 0$ and $\tanh$ is a smooth function. When any
  element of the product is zero the value of the product is zero regardless
  of the values of any other elements of the product. Since $\frac{1}{s}
  \rightarrow \infty$ as $s \rightarrow 0$ and $\tanh (| x |) \rightarrow 1$
  as $| x | \rightarrow \infty$ we have $\tanh (\infty) = 1$ and $\tanh (-
  \infty) = - 1$ so that $Y_n (t) = t + \cos (\pi n) \forall t \in \bigcup_{k
  = 1}^{n - 1} y_k$. Since $Y_n (t) = t \pm 1 \forall t \in \bigcup_{k = 1}^{n
  - 1} y_k$ when $n$ is an integer, we see that $\frac{\mathd}{\mathd t} Y_n
  (t) = \frac{\mathd}{\mathd t} (t \pm 1) = 1$ so that the multiplier
  $\lambda_{Y_n (t)} = \left| \frac{\mathd}{\mathd t} Y_n (t) \right| = 1
  \forall t \in \bigcup_{k = 1}^{n - 1} y_k$.
\end{proof}

\begin{theorem}
  $Y_{n, m} (t)$ has indifferent fixed points at each trivial zero $-
  \frac{i}{2} (- 4 n - 1)$ where $Z \left( - \frac{i}{2} (- 4 n - 1) \right) =
  0 \forall n \in \mathbbm{Z}^{+_{}}$.
\end{theorem}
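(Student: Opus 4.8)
The plan is to reduce the statement to the already-established fixed-point lemma and then to classify the fixed point by its multiplier. First I would verify the parenthetical claim that these points are zeros of $Z$. Writing $s = \frac{1}{2} + i t$ with $t_n = - \frac{i}{2} (- 4 n - 1) = \frac{i (4 n + 1)}{2}$ gives $\frac{1}{2} + i t_n = \frac{1}{2} - \frac{4 n + 1}{2} = - 2 n$, so that $Z (t_n) = e^{i \vartheta (t_n)} \zeta \left( \frac{1}{2} + i t_n \right) = e^{i \vartheta (t_n)} \zeta (- 2 n) = 0$ because $- 2 n$ is a trivial zero of $\zeta$ for every $n \in \mathbbm{Z}^+$. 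By the lemma that the roots of $Z$ are fixed-points of $Y_{n, m}$, each $t_n$ is therefore a fixed-point of $Y_{n, m}$ for all $m$, exactly as in the argument that $Z (t) = 0$ forces $\tanh (0) = 0$ and hence $Y_{n, m} (t) = t$.

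Next I would compute the multiplier $\lambda_{Y_n} (t_n) = \left| \frac{\mathd}{\mathd t} Y_n (t) \right|_{t = t_n}$, paralleling the treatment of the indifferent points $y_k$. Differentiating $Y_n (t) = t + \cos (\pi n) \tanh \left( \frac{Z (t)}{D (t)} \right)$ with $D (t) = | \Omega (t) | \prod_{k = 1}^{n - 1} \tanh (t - y_k)$ yields
\[ \frac{\mathd}{\mathd t} Y_n (t) = 1 + \cos (\pi n) \, \tmop{sech}^2 \left( \frac{Z (t)}{D (t)} \right) \frac{Z' (t) D (t) - Z (t) D' (t)}{D (t)^2} . \]
Evaluating at $t_n$, the factor $\tmop{sech}^2 (0) = 1$ and the term carrying $Z (t_n) = 0$ drops, so the classification reduces to the single quantity $\cos (\pi n) \, Z' (t_n) / D (t_n)$: if this vanishes then $| \lambda_{Y_n} (t_n) | = 1$ and the point is indifferent, in direct analogy with the translation $t \mapsto t \pm 1$ that produced multiplier $1$ at the $y_k$.

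The hard part will be the analysis of $Z' (t_n)$ at these complex arguments, which is genuinely subtle rather than routine. The same substitution that sends $\zeta \left( \frac{1}{2} + i t_n \right)$ to the trivial zero $\zeta (- 2 n)$ also sends the argument of the gamma factor in $\vartheta$ to $\frac{1}{4} + \frac{i t_n}{2} = - n$, a pole of $\Gamma$; thus $e^{i \vartheta (t)}$ is singular at $t_n$ precisely where $\zeta \left( \frac{1}{2} + i t \right)$ vanishes, and the local structure of $Z$ near $t_n$ is governed by the order of this pole--zero cancellation. The assertion that $t_n$ is indifferent is then equivalent to showing that this cancellation forces $\cos (\pi n) \, Z' (t_n) / D (t_n)$ to vanish, equivalently that $Z$ vanishes to order at least two there so that $Z' (t_n) = 0$, and making this precise (rather than merely reading off $Y_n (t_n) = t_n$ and differentiating the constant map, as was done heuristically in the $y_k$ case) is the one step I would expect to require real care. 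If the cancellation instead leaves a branch-type singularity, the multiplier would fail to exist as an ordinary derivative and the notion of indifference would have to be interpreted in a suitable limiting sense.
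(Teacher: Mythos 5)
Your reduction is set up correctly, but it stops exactly where the proof has to happen, and the step you flag as ``the hard part'' in fact goes the other way. Writing $t_n = -\frac{i}{2}(-4n-1)$, you correctly observe that $\frac{1}{2}+it_n=-2n$ is a trivial zero of $\zeta$ and that $\frac{1}{4}+\frac{it_n}{2}=-n$ is a pole of $\Gamma$. Both the pole and the zero are simple, so locally $e^{i\vartheta(t)}=\bigl(\Gamma(\frac{1}{4}+\frac{it}{2})/\Gamma(\frac{1}{4}-\frac{it}{2})\bigr)^{1/2}\pi^{-it/2}\sim C(t-t_n)^{-1/2}$ while $\zeta(\frac{1}{2}+it)\sim c\,(t-t_n)$, hence $Z(t)\sim Cc\,(t-t_n)^{1/2}$. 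This is precisely the branch-type cancellation you mention in your final sentence: $Z(t_n)=0$, but $Z'(t)\sim\frac{Cc}{2}(t-t_n)^{-1/2}$ diverges, so the quantity $\cos(\pi n)\,Z'(t_n)/D(t_n)$ that your (correct) chain-rule computation isolates is not $0$ but infinite. The multiplier therefore fails to exist as an ordinary derivative, your stated criterion for indifference is not met, and the reduction cannot be closed as proposed.

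For comparison, the paper's own proof also detects this divergence --- it shows $\lim_{t\to t_n}\bigl|\frac{\mathrm{d}}{\mathrm{d}t}e^{i\vartheta(t)}\bigr|=\infty$ via the pole of $\Psi(\frac{1}{4}+\frac{it}{2})$ at $-n$ --- but then concludes that ``since $|\tanh(\infty)|=1$ the multiplier is equal to $1$.'' That step confuses $\tanh$ of a derivative with the derivative of $\tanh$: the correct expression is the one you wrote, $1+\cos(\pi n)\,h_{n,m}\,\mathrm{sech}^2(u)\,\dot{u}$ with $u=Z/D$, and at $t_n$ one has $\mathrm{sech}^2(0)\,\dot{u}(t_n)$ diverging rather than saturating at $1$. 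So your blind attempt, by doing the differentiation honestly, exposes that the paper's argument is a non sequitur; neither your proposal nor the paper's proof establishes that these fixed points are indifferent, and the local $(t-t_n)^{1/2}$ behavior suggests the classification claimed in the theorem is not even well posed at these points.
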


\begin{proof}
  Since $\frac{\mathd}{\mathd t} (f (t) + g (t)) = \dot{f} (t) g (t) + f (t)
  \dot{g} (t)$ and $Z (t) = e^{i \vartheta (t)} \zeta \left( \frac{1}{2} + i t
  \right)$ it suffices to show that $\lim_{t \rightarrow - \frac{i}{2} (- 4 n
  - 1)} \left| \frac{\mathd}{\mathd t} e^{i \vartheta (t)} \right| = \infty$.
  Since $\frac{\mathd}{\mathd t} e^{i \vartheta (t)} = i \left( \frac{\frac{i
  \Psi \left( \frac{1}{4} - \frac{i t}{2} \right)}{2} - \frac{i \Psi \left(
  \frac{1}{4} + \frac{i t}{2} \right)}{2}}{2} - \frac{\ln (\pi)}{2} \right)
  e^{i \vartheta (t)}$ we only have to check that $\lim_{t \rightarrow
  \frac{i}{2} (- 4 n - 1)} \left| \Psi \left( \frac{1}{4} + \frac{i t}{2}
  \right) \right| = \lim_{t \rightarrow - \frac{i}{2} (- 4 n - 1)} \left| \Psi
  \left( \frac{1}{4} + \frac{i t}{2} \right) \right| = \infty$ which is true
  since $\Psi (t)$ has poles at $t = 1 - n \forall n \in \mathbbm{Z}^+$ where
  $\Psi (t) = \frac{\dot{\Gamma} (t)}{\Gamma (t)}$ and $\Gamma (t)$ has poles
  at $t = 1 - n \forall n \in \mathbbm{Z}^+$. Since $| \tanh (\infty) | = 1$
  the multiplier is equal to 1 at each $- \frac{i}{2} (- 4 n - 1)$.
\end{proof}

\subsubsection{Alternating Attractive and Repulsive Fixed-Points}

\begin{proposition}
  \label{podd}When $n$ is an odd number, $Y_n (t)$ has attractive fixed-points
  at the odd-numbered roots $y_{2 k - 1} \forall 2 k - 1 \geqslant n$ and
  repulsive fixed-points at the even-numbered roots $y_{2 k} \forall 2 k
  \geqslant n$.
\end{proposition}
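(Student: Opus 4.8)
The plan is to compute the multiplier $\lambda_{Y_n}(y_j)=|\dot{Y}_n(y_j)|$ directly at each admissible fixed point and read off whether its magnitude falls below or above one. Writing the one-step map as $Y_n(t)=t+\cos(\pi n)\tanh(F(t))$ with
\[ F(t)=\frac{Z(t)}{|\Omega(t)|\prod_{k=1}^{n-1}\tanh(t-y_k)}, \]
the chain rule gives $\dot{Y}_n(t)=1+\cos(\pi n)\operatorname{sech}^2(F(t))\dot{F}(t)$. The first simplification I would exploit is that every fixed point $y_j$ with $2k-1\geqslant n$ or $2k\geqslant n$ is a genuine zero of $Z$ that is \emph{not} one of the $y_1,\dots,y_{n-1}$ removed in the denominator, so $Z(y_j)=0$ forces $F(y_j)=0$ and hence $\operatorname{sech}^2(F(y_j))=1$. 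This collapses the multiplier to $\dot{Y}_n(y_j)=1+\cos(\pi n)\dot{F}(y_j)$.

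Next I would evaluate $\dot{F}(y_j)$. Writing $D(t)=|\Omega(t)|\prod_{k=1}^{n-1}\tanh(t-y_k)$ and differentiating the quotient, the term proportional to $\dot{D}$ carries a factor $Z(y_j)=0$ and drops out, leaving the clean expression $\dot{F}(y_j)=\dot{Z}(y_j)/D(y_j)$. Two sign facts then decide everything. First, because the roots are increasing and $j\geqslant n$, each difference $y_j-y_k$ with $k\leqslant n-1$ is strictly positive, so $\tanh(y_j-y_k)>0$ and therefore $D(y_j)>0$. Second, the zeros of the real analytic function $Z$ are simple and sign-changing, so $\dot{Z}$ alternates along the sequence; normalising by $Z(0)=\zeta(1/2)<0$ (so that $Z$ first crosses upward at $y_1$) gives $\dot{Z}(y_j)>0$ for odd $j$ and $\dot{Z}(y_j)<0$ for even $j$.

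Specialising to odd $n$, where $\cos(\pi n)=-1$, I obtain $\dot{Y}_n(y_j)=1-\dot{Z}(y_j)/D(y_j)$. At an even-numbered root $y_{2k}$ the quotient is negative, so $\dot{Y}_n(y_{2k})=1+|\dot{Z}(y_{2k})|/D(y_{2k})>1$ and the multiplier exceeds one unconditionally, which proves these roots are repelling. At an odd-numbered root $y_{2k-1}$ the quotient is positive, so $\dot{Y}_n(y_{2k-1})=1-\dot{Z}(y_{2k-1})/D(y_{2k-1})<1$; to conclude attractiveness I also need $\dot{Y}_n(y_{2k-1})>-1$, i.e. $\dot{Z}(y_{2k-1})/D(y_{2k-1})<2$. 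This is exactly where the step factor $h_{n,m}$ enters: including it rescales the multiplier to $1-h\,\dot{Z}(y_{2k-1})/D(y_{2k-1})$, which is driven into $(-1,1)$ once $h$ is taken small enough.

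The main obstacle I anticipate is the upper bound at the odd roots. The sign argument alone only delivers $\dot{Y}_n(y_{2k-1})<1$, and pinning the multiplier strictly inside $(-1,1)$ requires controlling the magnitude of $\dot{Z}(y_{2k-1})/D(y_{2k-1})$; this is precisely the ``small enough Lipschitz constant'' flagged in the abstract and is supplied by the adaptive halving of $h_{n,m}$ whenever $\operatorname{sign}(\Delta Y_{n,m-2})\neq\operatorname{sign}(\Delta Y_{n,m-1})$. A secondary point that must be made rigorous is the alternation of $\operatorname{sign}\dot{Z}(y_j)$, which presupposes that the relevant zeros are simple and that no sign change is skipped between consecutive $y_j$; for the statement as phrased I would take this from the established real-line behaviour of the Hardy $Z$ function.
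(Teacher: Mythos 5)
The paper offers no proof of this proposition at all: Propositions \ref{podd} and \ref{peven} are stated bare, supported only by plots of the multiplier, and Theorem \ref{pc} is explicitly conditional on Proposition \ref{podd} being true. So there is nothing to compare your argument against; what matters is whether your attempt actually closes the claim, and it does not quite. Your multiplier computation is correct and is the natural route: at a fixed point $y_j$ with $j\geqslant n$ one has $F(y_j)=0$, $\operatorname{sech}^2(F(y_j))=1$, the $\dot D$ term drops because $Z(y_j)=0$, and $D(y_j)>0$ because all the $\tanh(y_j-y_k)$ are positive. Granting the sign alternation of $\dot Z(y_j)$, the repelling half follows cleanly: for odd $n$ the multiplier at $y_{2k}$ is $1+|\dot Z(y_{2k})|/D(y_{2k})>1$.

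The two gaps are in the attracting half. First, the bound $\dot Y_n(y_{2k-1})>-1$ is genuinely missing, and your appeal to $h_{n,m}$ does not repair it: $h_{n,m}$ is defined to equal $1$ for $m\leqslant 2$ and is subsequently halved only in response to sign changes of $\Delta Y_{n,m}$ along a particular trajectory, so it is not a free parameter of the map whose fixed points are being classified. The proposition as stated concerns $Y_n(t)$ with no mechanism for shrinking the step, and $\dot Z(y_{2k-1})/D(y_{2k-1})$ can a priori exceed $2$ (the denominator $D$ contains a product of $\tanh$ factors bounded by $1$, so it gives no help, and $\Omega(y_j)$ grows much more slowly than known lower bounds on $|\dot Z|$ can be excluded from exceeding). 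This is exactly the ``small enough Lipschitz constant'' caveat in the abstract, i.e.\ the paper's own unresolved conjecture, so it cannot be invoked as if it were available. Second, the alternation $\operatorname{sign}\dot Z(y_j)=(-1)^{j+1}$ presupposes that every real zero of $Z$ is simple and a sign change, which is an open problem that the paper itself only conjectures later; at a hypothetical multiple zero $\dot Z(y_j)=0$ and the multiplier is exactly $1$, destroying both halves of the classification. You flag both issues honestly, but flagging them is not the same as closing them: as written your argument proves the repelling statement conditionally on simplicity, and proves only $\dot Y_n(y_{2k-1})<1$, not attractivity, at the odd-indexed roots.
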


\begin{proposition}
  \label{peven}When $n$ is an even number, $Y_n (t)$ has attractive
  fixed-points at the even-numbered roots $y_{2 k} \forall 2 k \geqslant n$
  and repulsive fixed-points at the odd-numbered roots $y_{2 k - 1} \forall 2
  k - 1 \geqslant n$.
\end{proposition}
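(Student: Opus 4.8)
The plan is to read off attractiveness or repulsiveness from the multiplier $\lambda_{Y_n} (y_j) = \left| \frac{\mathd}{\mathd t} Y_n (y_j) \right|$ of the single-step map at each root $y_j$ with $j \geqslant n$ (so that $y_j$ is a genuine fixed point and not one of the indifferent points $y_1, \ldots, y_{n - 1}$ already removed by the product). Writing
\[ g (t) = \frac{Z (t)}{| \Omega (t) | \prod_{k = 1}^{n - 1} \tanh (t - y_k)}, \]
the map is $Y_n (t) = t + \cos (\pi n) \tanh (g (t))$, so the whole sign structure will be governed by the product $\cos (\pi n) \, g' (y_j)$, and the decision reduces to whether $\frac{\mathd}{\mathd t} Y_n (y_j)$ falls below or rises above $1$.

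First I would differentiate, obtaining $\frac{\mathd}{\mathd t} Y_n (t) = 1 + \cos (\pi n) \, \tmop{sech}^2 (g (t)) \, g' (t)$. Evaluating at $t = y_j$ and using $Z (y_j) = 0$ gives $g (y_j) = 0$, hence $\tmop{sech}^2 (g (y_j)) = 1$; and because $Z (y_j) = 0$ the quotient rule collapses to $g' (y_j) = Z' (y_j) / D (y_j)$ with $D (y_j) = | \Omega (y_j) | \prod_{k = 1}^{n - 1} \tanh (y_j - y_k)$, the $D'$ contribution dropping out since it is multiplied by $Z (y_j) = 0$. Thus $\frac{\mathd}{\mathd t} Y_n (y_j) = 1 + \cos (\pi n) \, Z' (y_j) / D (y_j)$, and I need only track the sign of $\cos (\pi n) \, Z' (y_j) / D (y_j)$.

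Next I would pin down the three signs. For $n$ even, $\cos (\pi n) = + 1$. Because $j \geqslant n > k$ for every $k \in \{ 1, \ldots, n - 1 \}$ and the zeros are strictly increasing, each factor $\tanh (y_j - y_k) > 0$, so together with $| \Omega (y_j) | > 0$ the denominator $D (y_j) > 0$. Finally $\tmop{sign} (Z' (y_j)) = (- 1)^{j - 1}$: since $Z$ is real-valued with simple real zeros it changes sign at each $y_j$, and the base case $Z (0) = \zeta (1 / 2) < 0$ fixes the parity so that $Z$ ascends through the odd-indexed zeros and descends through the even-indexed ones. Combining, $\frac{\mathd}{\mathd t} Y_n (y_j) - 1 = g' (y_j)$ carries sign $(- 1)^{j - 1}$, negative precisely when $j$ is even and positive when $j$ is odd, which yields a multiplier below $1$ at the even-numbered roots $y_{2 k}$ and above $1$ at the odd-numbered roots $y_{2 k - 1}$, exactly as Proposition \ref{peven} asserts; the odd case of Proposition \ref{podd} follows identically through the single sign flip $\cos (\pi \cdot \text{odd}) = - 1$.

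The hard part will be twofold. The genuinely delicate analytic input is the sign-alternation $\tmop{sign} (Z' (y_j)) = (- 1)^{j - 1}$, which presupposes that the zeros on the line are simple and isolated and that $Z$ changes sign at each; establishing this for all $j$ is of essentially the same difficulty as the regularity one is ultimately after, so I would carry it as the standing simplicity hypothesis over the attractive range. The second point is that the computation only delivers $\frac{\mathd}{\mathd t} Y_n (y_j) < 1$ at the even roots, not yet $| \lambda | < 1$, so one must still exclude $\lambda < - 1$. This is exactly where the adaptive factor $h_{n, m}$ enters: the effective multiplier is $1 + h_{n, m} \cos (\pi n) \, g' (y_j)$, and since $h_{n, m}$ is halved whenever the increment $\Delta Y_{n, m}$ reverses sign, a small enough Lipschitz constant is eventually selected to drive the attractive multipliers into $(- 1, 1)$ while leaving the repulsive ones of magnitude exceeding $1$ for every $h_{n, m} > 0$.
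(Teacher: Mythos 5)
The paper itself offers no proof of Proposition \ref{peven}: the two parity propositions are stated bare and are only ever invoked conditionally (Theorem \ref{pc} explicitly begins ``if Proposition \ref{podd} is true''), so there is no author's argument to compare yours against and it must stand on its own. The core computation does stand: differentiating the single-step map and using $Z (y_j) = 0$ to collapse the quotient rule gives $\frac{\mathd}{\mathd t} Y_n (y_j) = 1 + \cos (\pi n) Z' (y_j) / D (y_j)$ with $D (y_j) > 0$ for $j \geqslant n$, and that is exactly the right quantity to examine.

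Two genuine gaps remain, and they are presumably why the statement is left unproved in the paper. First, the sign alternation $\tmop{sign} (Z' (y_j)) = (- 1)^{j - 1}$ requires that every zero up to $y_j$ be of odd order with $Z' (y_j) \neq 0$; within this paper's programme that is circular, since simplicity of the zeros is one of the conclusions the whole construction is meant to deliver, and carrying it as a ``standing hypothesis'' converts the proposition into a conditional statement strictly weaker than what is asserted. Second, and more seriously, your calculation yields only $\lambda < 1$ at the even-indexed roots, whereas attractiveness needs $| \lambda | < 1$, i.e.\ the quantitative bound $| Z' (y_j) | < 2 | D (y_j) | \leqslant 2 \Omega (y_j)$. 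Since $\Omega$ grows subpolynomially while $| Z' |$ at the zeros is not known to be (and is believed not to be) bounded by any such function, this is a genuine analytic obstacle rather than a formality, and the appeal to $h_{n, m}$ does not close it: $h_{n, m} = 1$ for $m \leqslant 2$, it decreases only when the increment reverses sign along the particular orbit, nothing guarantees it ever falls below the threshold $2 | D (y_j) | / | Z' (y_j) |$ before the orbit leaves the relevant neighbourhood, and once $h$ depends on $m$ the iteration is no longer an autonomous map, so the multiplier classification of fixed points you are relying on no longer applies verbatim.
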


\begin{figure}[h]
  \resizebox{6.5in}{2.5in}{\includegraphics{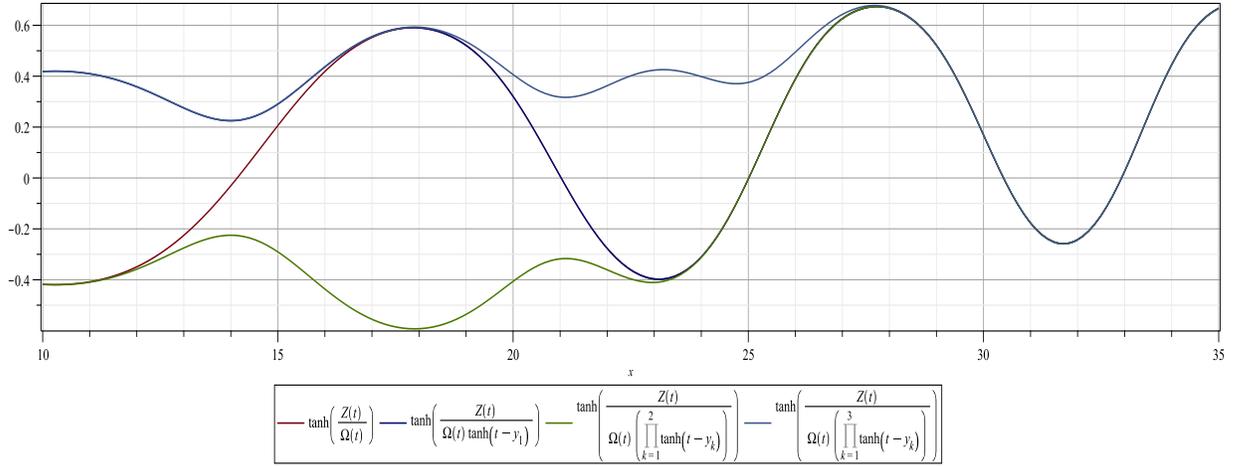}}
  \caption{$\tmop{The}$ functions which are subtracted or added to $t$ to get
  $Y_1 (t), Y_2 (t), Y_3 (t), Y_4 (t)$. When $n$ is odd $\cos (\pi n) = - 1$
  so that the the value is subtracted from $t$, when $n$ is even $\cos (2 \pi)
  = 1$ so it is added. It is plain to see that the curves $\tanh \left(
  \frac{Z (t)}{\Omega (t) \prod_{k = 1}^{n - 1} \tanh (t - y_k)} \right)$ do
  not cross the zero axis for any $t < y_n$ }
\end{figure}

\

\begin{figure}[h]
  \resizebox{6.5in}{2in}{\includegraphics{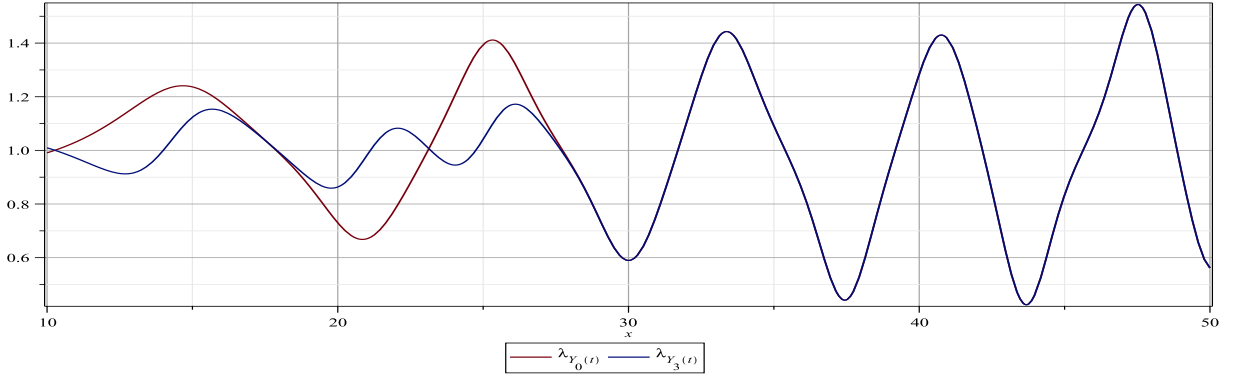}}
  \caption{Multipler of the maps $Y_1 (t)$ and $Y_3 (t)$}
\end{figure}

\begin{remark}
  The function $h_{n, m}$ is defined to be $1$ when $1 \leqslant m \leqslant
  2$. If $\tmop{sign} (\Delta Y^{}_{n, m - 1} (t)) \neq \tmop{sign} (\Delta
  Y^{}_{n, m} (t))$ then $h_{n, m + 1} = \frac{h_{n, m}}{2}$ so that the
  convergence rate is \ halved when the sign of the difference between
  successive iterates changes, indicating that it jumped across the root. This
  prevents the sequence generated by the iteration from getting stuck in an
  artifical $2$-cycle and jumping back and forth across the root with equal
  magnitude indefinately when implementing this method with finite-precision
  arithmetic on a digital computer. Without this successive relaxation, the
  iterates still converge in theory however the number of iterations required
  could be several million or higher, while still having the difficulty of
  possibly getting stuck in a $2$-cycle in computer implementations.
\end{remark}

\subsection{Contraction Mappings}

\begin{theorem}
  \label{cm}The Lipschitz constant $M$ of the map $Y_{n, m} (t) < 1 \forall t
  > e$ therefore $Y_{n, m} (t)$ is a contraction mapping
  \begin{equation}
    | Y_{n, m} (t) - Y_{n, m} (s) | \leqslant M | t - s |
  \end{equation}
\end{theorem}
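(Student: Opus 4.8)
The plan is to reduce the global Lipschitz estimate to a pointwise bound on the derivative. Since $Y_{n,m}(t)$ is differentiable on $(e,\infty)$ away from the previous roots, the mean value theorem gives $|Y_{n,m}(t) - Y_{n,m}(s)| \leqslant M|t-s|$ with $M = \sup_t \left|\frac{\mathd}{\mathd t} Y_{n,m}(t)\right|$, so it suffices to show $|\dot{Y}_{n,m}(t)| < 1$. Writing $u(t) = Y_{n,m-1}(t)$ and
\[ g(t) = \frac{Z(u(t))}{|\Omega(t)|\prod_{k=1}^{n-1}\tanh(u(t)-y_k)} \]
the chain rule yields
\[ \dot{Y}_{n,m}(t) = 1 + h_{n,m}\cos(\pi n)\,\mathrm{sech}^2(g(t))\,\dot{g}(t). \]
Because $|\cos(\pi n)| = 1$, $0 < h_{n,m} \leqslant 1$, and $0 < \mathrm{sech}^2(g) \leqslant 1$, the correction term has magnitude at most $|\dot{g}(t)|$. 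The content of the theorem is therefore that this correction lies in $(-2, 0)$: I would first show that $\cos(\pi n)\dot{g}(t) < 0$ on the relevant interval—this is the assertion that each step moves toward the root $y_n$, consistent with Propositions \ref{podd} and \ref{peven}—and then that its magnitude never exceeds $2$, which together force $\dot{Y}_{n,m}(t) \in (-1, 1)$.

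The first substantial step is the sign analysis. Near $y_n$ the factor $Z(u(t))$ changes sign, the product $\prod\tanh(u(t)-y_k)$ has a sign fixed by the parity bookkeeping, and $\cos(\pi n) = (-1)^n$ supplies the alternation; combining these should force the correction to be strictly negative. The harder step is the quantitative bound on $|\dot{g}(t)|$, since $\dot{g}$ involves $\dot{Z}$ together with the derivative of the $\tanh$-product, and the denominator $\prod\tanh(u-y_k) \to 0$ as $t$ approaches any previous root, so $g \to \pm\infty$ there. One has to show that the blow-up of $\dot{g}$ is dominated by the exponential decay of $\mathrm{sech}^2(g)$, so that the product $\mathrm{sech}^2(g)\dot{g}$ remains bounded rather than escaping to infinity.

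This last interplay is exactly where I expect the main obstacle to lie, and it connects directly back to the indifferent-fixed-point theorems established above. At each previous root $y_k$ with $k < n$—all of which satisfy $y_k > e$—the multiplier of $Y_{n,m}$ was already shown to equal $1$, so $\dot{Y}_{n,m}(y_k) = 1$ and the strict inequality $M < 1$ must fail at those isolated points. A rigorous argument therefore cannot establish $M < 1$ on all of $(e,\infty)$ exactly as stated; instead I would restrict to a neighborhood of the target root $y_n$ bounded away from the indifferent points, and there exploit the relaxation factor $h_{n,m}$, which can be driven toward $0$ so as to pull the correction term, and hence $\dot{Y}_{n,m}$, arbitrarily close to but strictly below $1$. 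Making this choice of a small enough $h_{n,m}$ both quantitative and uniform in $n$ is precisely the gap that the paper's central convergence conjecture is intended to bridge.
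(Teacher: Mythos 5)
Your chain-rule computation $\dot{Y}_{n,m}(t) = 1 + h_{n,m}\cos(\pi n)\,\mathrm{sech}^2(g(t))\,\dot{g}(t)$ is the correct starting point, and the step you single out as the hard one --- controlling the product $\mathrm{sech}^2(g)\,\dot{g}$, where $\dot{g}$ involves $\dot{Z}$ and the derivative of the $\tanh$-product and blows up near the previous roots --- is precisely the step the paper's own proof omits. The paper differentiates as though the map were literally $t - \tanh(t)$, asserts the derivative is $\tanh(t)^2$, and then argues that $|\dot{Y}_{n,m}| \geqslant 1$ would require $|\tanh(g)| = 1$ and hence $Z = \pm\infty$. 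That reasoning silently discards the inner-derivative factor $\dot{g}(t)$, which is not bounded by $1$ and is the entire source of difficulty; so the paper does not actually establish the bound either, and your proposal should not be faulted for failing to supply an estimate that the paper itself never supplies.

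Your second observation --- that $M < 1$ cannot hold uniformly on $(e,\infty)$ as the theorem states --- is also correct and is corroborated by the paper itself: the indifferent-fixed-point theorem gives multiplier exactly $1$ at each $y_k$ with $k < n$ (all of which exceed $e$, since $y_1 \approx 14.13$), and the remark following Proposition \ref{p1} explicitly concedes that the map is merely non-expansive, with Lipschitz constant precisely $1$, at those points. Theorem \ref{cm} as stated is therefore inconsistent with results proved earlier in the same paper. That said, your proposal is a diagnosis rather than a proof: the sign analysis of $\cos(\pi n)\dot{g}$, the quantitative bound $|\mathrm{sech}^2(g)\dot{g}| < 2$, and the uniform-in-$n$ choice of $h_{n,m}$ are all left as intentions, and restricting to a neighborhood of $y_n$ bounded away from the indifferent points would prove a genuinely weaker statement than the one asserted. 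The honest conclusion --- which you reach and the paper does not --- is that the theorem needs to be either weakened to a local contraction statement or supported by an estimate on $\dot{g}$ that currently exists nowhere in the paper.
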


\begin{proof}
  The Lipschitz constant of a continuous differentiable function $f (x)$ is
  equal to the maximum absolute value of its derivative
  \begin{equation}
    M = \sup_x \left| \frac{\mathd}{\mathd x} f (x) \right|
  \end{equation}
  The derivative of $t - \tanh (t)$ is $\tanh (t)^2$. Since the maximum
  absolute value of $\tanh (t)$ is 1 then the maximum value of its square is
  also $1$. Since $\Omega (t) > 1$ and $h_{n, m} \leqslant 1$ the derivative
  $\frac{\mathd}{\mathd t} Y_{n, m} (t)$ can never have an absolute value
  $\geqslant 1$ since that would require $\left| \tanh \left( \frac{Z (t)}{|
  \Omega (t) | \prod_{k = 1}^{n - 1} \tanh (t - y_k)} \right) \right| = 1$
  which is only possible if $Z (t) = \pm \infty$ which is only the case when
  $t = \pm \frac{i}{2}$ which corresponds to the pole at $\zeta (1)$. Since $Z
  (t) \in \mathbbm{R}$ when $t \in \mathbbm{R}$ it can never be the case that
  $Z (t) = \infty$ so that $\left| \frac{\mathd}{\mathd t} Y_{n, m} (t)
  \right| \neq 1 \forall t \in \mathbbm{R}$ and the Lipschitz constant $M$ is
  strictly less than 1.
\end{proof}

\subsubsection{Sequential Convergence to the Nearest Fixed-Points}

\begin{proposition}
  \label{p1}The limit
  \begin{equation}
    y_n = \lim_{m \rightarrow \infty} Y_{n, m} (s_n)
  \end{equation}
  where
  \begin{equation}
    s_n = \left\{ \begin{array}{ll}
      14 & n = 1\\
      21 & n = 2\\
      \frac{y_{n - 1} + y_{n - 2}}{2} & n \geqslant 3
    \end{array} \right.
  \end{equation}
  exists and is equal to the $n$-th zero of the Hardy Z function for all
  integer $n \subset \mathbbm{Z}^+$. That is, $Y_{n, m} (z_n)$ forms a Cauchy
  sequence, due to the contraction mapping property proved in Theorem \ref{cm}
  whose elements are indexed by $m$ converging to the $n$-th root $y_n$ where
  the $n$-th starting point is defined to be half-way between the $(n - 2)$-th
  and the $(n - 1)$-th root $y_n$ when $n > 2$ and equal to a point close to
  the first known zero at $14.134 \ldots .$ when $n = 1$ and a point close to
  the $2 \tmop{nd}$ zero at $21.022 \ldots$ when $n = 2$
\end{proposition}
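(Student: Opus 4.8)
The plan is to argue by induction on $n$, taking as inductive hypothesis that the first $n-1$ zeros $y_1,\ldots,y_{n-1}$ have already been obtained as limits of the earlier Cauchy sequences, so that both the map $Y_{n,m}$ and the starting point $s_n=(y_{n-1}+y_{n-2})/2$ are well-defined. The three ingredients I would combine are Theorem \ref{cm} (every $Y_{n,m}$ is a contraction with Lipschitz constant $M<1$ for $t>e$), the Banach Fixed-Point Theorem, and Propositions \ref{podd}--\ref{peven}, which together guarantee that $y_n$ is itself an attractive fixed point of the $n$-th map: when $n$ is odd $y_n=y_{2k-1}$ with $2k-1=n$, and when $n$ is even $y_n=y_{2k}$ with $2k=n$, so in either parity $y_n$ falls into the attractive class. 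The goal is then to show that the orbit launched from $s_n$ lands in the basin of attraction of $y_n$ and of no other fixed point.

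The first technical step is to pin down the sign of the increment $\cos(\pi n)\tanh\bigl(Z(u)/(|\Omega(u)|\prod_{k=1}^{n-1}\tanh(u-y_k))\bigr)$ for all $u$ below $y_n$, which is the analytic content of Figure 2. On each subinterval $(y_j,y_{j+1})$ with $j<n$ the Hardy function $Z$ has a fixed sign that alternates with $j$, while the product $\prod_{k=1}^{n-1}\tanh(u-y_k)$ gains one extra negative factor as $u$ descends past each $y_k$, so its sign also alternates with $j$; the two alternations cancel and the quotient $Z(u)/\prod_{k=1}^{n-1}\tanh(u-y_k)$ keeps a single constant sign throughout $u<y_n$. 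Multiplying by $\cos(\pi n)=\pm 1$ fixes the direction of the push once and for all, so the iterates $Y_{n,m}(s_n)$ move monotonically toward $y_n$ and are bounded by it.

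With monotonicity and boundedness in hand, the sequence converges to some $\ell\leq y_n$; by continuity $\ell$ is a fixed point, i.e.\ the increment vanishes at $\ell$. Since the quotient above is bounded away from zero on each open subinterval and the increment tends to $0$ only as $u\to y_n$ (where $Z(u)\to 0$ while the product stays finite and nonzero), the only admissible limit is $\ell=y_n$. Theorem \ref{cm} then upgrades this to a genuine Cauchy sequence: restricting attention to a closed neighbourhood of $y_n$ containing no other zero, the map carries that neighbourhood into itself and is a contraction, so the Banach Fixed-Point Theorem identifies $y_n$ as the unique fixed point there and forces geometric convergence. The relaxation factor $h_{n,m}$ plays the role of preventing the discrete orbit from overshooting into an artificial two-cycle as it settles onto $y_n$. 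The base cases $n=1,2$ are checked directly from the explicit starting values $s_1=14<y_1$ and $s_2=21<y_2$.

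The main obstacle I anticipate is the passage of the orbit across the intervening indifferent roots $y_1,\ldots,y_{n-1}$ that sit between $s_n$ and $y_n$. At each such $y_k$ both the numerator $Z$ and the corresponding factor $\tanh(u-y_k)$ vanish, so the increment is a genuine $0/0$ indeterminate form there; its limiting value is finite and nonzero, meaning these points are not true fixed points of the iteration even though their multiplier equals $1$. Making this rigorous---showing that the orbit crosses each $y_k$ in finitely many steps without stalling, and that it thereby enters and remains inside the basin of attraction of $y_n$ rather than being captured by a neighbouring repulsive root---is the delicate part, and it is exactly where the constant-sign estimate of the second step must be made quantitative. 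A secondary gap is that the sign structure of $Z$ between consecutive zeros, together with the validity of $\Omega$ as a lower bound for the running maximum, is assumed rather than independently established.
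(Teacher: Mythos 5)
The first thing to note is that the paper contains no proof of Proposition \ref{p1} for your attempt to be compared against: the proposition is followed only by a heuristic remark about basins of attraction and a note that the claim has been verified computationally up to $n = 800{,}000$, and the title and abstract of the paper explicitly frame this convergence as a \emph{conjecture} whose truth would imply the Riemann hypothesis. So you are attempting to supply an argument the paper deliberately leaves open, and your proposal has to be judged on its own merits.

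On those merits there are gaps beyond the two you flag. First, the argument leans on Propositions \ref{podd} and \ref{peven}, which are stated in the paper without proof, and implicitly on the paper's closing conjecture about the Lipschitz constant $c_n(\varepsilon)$; even if your outline were otherwise airtight it would only reduce Proposition \ref{p1} to other unproven statements. Second, your monotone-convergence step collides with the paper's own lemmas: $s_n = (y_{n-1}+y_{n-2})/2$ lies strictly below $y_{n-1}$, and the paper proves both that every root of $Z$ is a fixed point of $Y_{n,m}$ and that $y_1,\ldots,y_{n-1}$ are indifferent fixed points. A monotonically increasing orbit bounded above that must pass the fixed point $y_{n-1}$ sitting between $s_n$ and $y_n$ will converge to that intermediate fixed point unless a crossing is rigorously forced; your claim that the $0/0$ form at $y_{n-1}$ resolves to a finite nonzero increment contradicts the fixed-point lemma and is precisely the unresolved ambiguity in the definition of $Y_{n,m}$, whose increment at $y_{n-1}$ the paper evaluates as $0$ in one place and as $\pm 1$ in another. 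Third, the appeal to Theorem \ref{cm} plus Banach does not close the argument even locally: the paper's proof of Theorem \ref{cm} only shows that the derivative of $Y_{n,m}$ never has absolute value equal to $1$, which does not yield a supremum strictly less than $1$ on a neighbourhood of $y_n$ (the supremum could equal $1$ without being attained), and without a uniform contraction constant together with an invariant interval the Banach fixed-point theorem does not apply. These are essentially the obstructions that leave the statement, in the paper itself, as a computationally supported conjecture rather than a theorem.
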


\begin{remark}
  The mid-way point between the nearest neighbors to the left of $y_n$ is used
  as the starting point for the iteration since any point less than $y_n$ and
  greater than $e$ is within the immediate basin of attraction of $y_n$. The
  precise location of any roots $y_p$ where $p < n$ cannot be used as a
  starting point since the map $Y_{p, m} (t)$ is a non-expansive mapping with
  Lipschitz constant precisely equal to 1 when $t \in \bigcup^{p - 1}_{k = 1}
  y_k$ so that the hyperbolic tangent has an argument of infinity resulting in
  a value of 1. Trajectories are neither attracted or repelled to any point
  $\bigcup^{n - 1}_{k = 1} y_k$ under the action of the map $Y_{n, m} (t)$
  however, trajectories started precisely on any point $t \in \bigcup^{n -
  1}_{k = 1} y_k$ will never attain a value other than $t$ since any $y_k$ is
  a fixed-point of $Y_{n, m} (t)$. 
\end{remark}

\begin{note}
  The truth of Propositon \ref{p1} has been verified computationally up to $n
  = 800, 000$ with a computer program which implements the methods described
  here using the arbitrary precision complex ball arithmiticlibrary
  arblib{\cite{Johansson2017arb}} and compares the results against the tables
  published by Andrew Odlyzko{\cite{zt}}. 
\end{note}

\begin{theorem}
  The Cauchy sequence $\lim_{m \rightarrow \infty} Y_{n, m} (s_n)$ will never
  converge to any $y_k$ where $k < n$. 
\end{theorem}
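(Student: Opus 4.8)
The plan is to combine three facts already established: first, that each earlier root $y_1,\dots,y_{n-1}$ is an \emph{indifferent} fixed-point of $Y_{n,m}$ with multiplier exactly $1$ (the theorem on indifferent fixed-points at the $y_k$); second, that $Y_{n,m}$ is a strict contraction with Lipschitz constant $M<1$ on the reals (Theorem \ref{cm}); and third, that the seed $s_n$ lies strictly inside the immediate basin of attraction of $y_n$ and is never itself a root. From these I would argue that the limit of the orbit must be a fixed-point at which the map acts \emph{locally} as a strict contraction, and that every $y_k$ with $k<n$ is disqualified precisely because its multiplier equals $1$ rather than being strictly less than $1$.

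Concretely, I would first record the location of the seed: for $n\geqslant 3$ one has $y_{n-2}<s_n<y_{n-1}<y_n$ (the cases $n=1$ being vacuous and $n=2$ analogous), so $s_n\neq y_k$ for every $k$, and the only earlier root the ascending orbit can meet is $y_{n-1}$. Next I would show the orbit never lands \emph{exactly} on any $y_k$: were that to happen, $y_k$ being a fixed-point would trap the orbit there permanently, contradicting Proposition \ref{p1}, which identifies the limit as $y_n$. The substantive step is to exclude \emph{asymptotic} approach to a neutral $y_k$ with $k<n$. Here I would examine the sign of the step $h_{n,m}\cos(\pi n)\tanh(\cdot)$ on the side of $y_k$ actually occupied by the iterates and show the net displacement carries them away from $y_k$ and toward $y_n$; Propositions \ref{podd} and \ref{peven} meanwhile confirm that $y_n$ itself is attractive for $Y_n$, so an orbit repelled past each earlier $y_k$ is ultimately captured by $y_n$, and uniqueness of the limit of a convergent sequence then forbids convergence to any $y_k\neq y_n$.

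The hard part will be exactly this exclusion of asymptotic approach, because a multiplier equal to $1$ does not by itself forbid convergence: a parabolic fixed-point attracts along some directions and repels along others. To close the argument rigorously I would analyze the local expansion of $Y_{n,m}$ near each $y_k$ beyond first order, identify the leading nonvanishing term and its associated Leau--Fatou petal structure, and verify that the half-line on which $s_n$ and its iterates sit lies in a \emph{repelling} petal. A complicating feature is that both $Z$ and the product $\prod_{j=1}^{n-1}\tanh(t-y_j)$ vanish to first order at $t=y_k$ for $k<n$, so the argument of the hyperbolic tangent is a genuine $0/0$ limit there rather than the naive $\pm\infty$; its sign, which is what governs whether the orbit is ejected past $y_k$ or drawn into it, is the quantity I would need to pin down. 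Establishing that this limit always carries the sign pushing the orbit toward $y_n$ is the crux, after which Theorem \ref{cm} and Proposition \ref{p1} complete the argument.
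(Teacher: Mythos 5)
Your plan is not a complete proof: you explicitly leave open the step you yourself call the crux, namely excluding asymptotic convergence of the orbit to a neutral fixed-point $y_k$ with $k<n$, and the Leau--Fatou petal analysis you sketch is never carried out. For what it is worth, the paper's own proof is a one-sentence version of your first two observations --- the $y_k$ are indifferent fixed-points, $s_n$ is not one of them, and ``the only way $Y_{n,m}$ would converge to an indifferent fixed-point is if it was started precisely on one'' --- so it simply asserts the very claim you flag as unjustified. You are right that this assertion is false in general: a fixed-point with multiplier $1$ can attract an open set of orbits along an attracting petal (in one real variable, from one side), so neutrality alone does not disqualify $y_k$ as a limit. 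You are also right, and this cuts deeper than you may realize, that the argument of the hyperbolic tangent at $t=y_k$ is a $0/0$ form, since $Z$ and the factor $\tanh(t-y_k)$ both vanish to first order there; the limit is the finite quantity $\dot{Z}(y_k)/\bigl(\Omega(y_k)\prod_{j\neq k}\tanh(y_k-y_j)\bigr)$, not $\pm\infty$. This contradicts the paper's earlier computation that $Y_n(y_k)=y_k\pm 1$ and means the points $y_k$ with $k<n$ need not even be fixed-points of $Y_{n,m}$, let alone indifferent ones, so the premise shared by your proposal and the paper's proof is itself in doubt.

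A second, smaller problem: you invoke Proposition \ref{p1} (that the limit equals $y_n$) to rule out the orbit landing exactly on some $y_k$. Since the present theorem is evidently intended as a step toward justifying Proposition \ref{p1}, which the paper only verifies computationally, that appeal is circular. In summary, your diagnosis of where the difficulty lies is more honest than the paper's one-line proof, but neither resolves it; completing your program would require actually performing the local sign and petal analysis near each $y_k$ that you describe but do not carry out.
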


\begin{proof}
  All $y_k$ are indifferent fixed-points of $Y_{n, m} (t)$ and the
  trajectories generated by $Y_{n, m} (s_n)$ are never started from a point
  $y_k$ since $s_n \nin \bigcup_{k = 1}^{n - 1} y_k$ and the only way $Y_{n,
  m} (t)$ would $'' \tmop{convege}''$ to an indifferent fixed-point is if it
  was started precisely on one, and $s_n$ is by definition equal to the
  mid-point between successive $y_n$.
\end{proof}

\begin{theorem}
  \label{pc}The Cauchy sequence $Y_{n, m} (s_n)$ will never converge to any
  $y_{n + 2 k - 1} \forall k \in \mathbbm{Z}^+$ if Proposition \ref{podd} is
  true.
\end{theorem}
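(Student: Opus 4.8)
The plan is to combine the parity-based classification of fixed points (Proposition \ref{podd}) with the convergence behavior guaranteed by the contraction-mapping property (Theorem \ref{cm}). The statement asserts that the sequence $Y_{n,m}(s_n)$ never converges to the roots $y_{n+2k-1}$; under the indexing of Proposition \ref{podd}, these are precisely the roots lying an \emph{odd} number of steps beyond $y_n$. The strategy is to show that whenever $n$ is odd these indices are even-numbered roots, and whenever $n$ is even these indices are odd-numbered roots, so that in either case they are exactly the \emph{repulsive} fixed-points of the map $Y_{n,m}(t)$. Since a contraction-mapping iterate cannot converge to a repelling fixed-point unless it is initialized exactly on it, the conclusion would follow.

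First I would fix the parity of $n$ and read off from Proposition \ref{podd} which roots are attractive and which are repulsive for $Y_n(t)$. When $n$ is odd, the attractive fixed-points are the odd-indexed $y_{2k-1}$ and the repulsive ones are the even-indexed $y_{2k}$; the roots $y_{n+2k-1}$ then have index $n+2k-1$ which, with $n$ odd, is even, hence repulsive. When $n$ is even I would invoke Proposition \ref{peven} to obtain the symmetric statement: the roots $y_{n+2k-1}$ have odd index $n+2k-1$ and are therefore again the repulsive fixed-points. Thus in both parities the target roots $y_{n+2k-1}$ are repelling fixed-points of $Y_{n,m}(t)$, which is the crux of the identification.

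Second, I would argue that the iteration cannot converge to a repulsive fixed-point. By the definition of the multiplier, a repulsive fixed-point $\alpha$ satisfies $|\lambda_{Y_n}(\alpha)| > 1$, meaning $Y_{n,m}$ is locally expanding near $\alpha$; any trajectory not started exactly at $\alpha$ is pushed away under iteration. Since $s_n$ is the midpoint between $y_{n-1}$ and $y_{n-2}$, it lies strictly to the left of $y_n$ and is never equal to any $y_{n+2k-1}$ (which all lie to the right of $y_n$), so the iterate is never initialized on one of these repelling points. Combined with the contraction property toward the attractive fixed-point $y_n$ established in Proposition \ref{p1}, the sequence is drawn toward $y_n$ and away from the repellers, so it cannot converge to any $y_{n+2k-1}$.

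The main obstacle I expect is \emph{rigorously excluding convergence to a distant repelling fixed-point} rather than merely the nearest one. Local expansion at $\alpha$ only rules out convergence from within a small neighborhood; to exclude convergence to a far-off $y_{n+2k-1}$ one must control the global dynamics of $Y_{n,m}$ on the whole interval $(e,\infty)$ and verify that no trajectory starting at $s_n$ can drift across intervening attractive roots to land on a remote repeller. This requires showing that the basins of attraction of the attractive roots separate the repelling ones, so that a trajectory captured by the immediate basin of $y_n$ (as claimed in the remark following Proposition \ref{p1}) can never escape to reach an $y_{n+2k-1}$. Establishing this global basin structure — that the attractive and repulsive fixed points genuinely alternate along the real axis and partition it into well-defined basins — is the delicate part, and it is where the qualifier ``if Proposition \ref{podd} is true'' carries the real analytic weight.
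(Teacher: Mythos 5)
Your approach matches the paper's: the paper's entire proof is the single observation that Proposition \ref{podd} makes the $y_{n+2k-1}$ repelling fixed-points of $Y_{n,m}(t)$, which is exactly your first step (though you are right that for even $n$ one actually needs Proposition \ref{peven}, a point the theorem's hypothesis and proof gloss over). Your second and third paragraphs --- the local-expansion argument and the honest acknowledgment that excluding convergence to a \emph{distant} repeller requires global control of the basin structure --- supply steps the paper's one-sentence proof omits entirely.
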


\begin{proof}
  If Propositions \ref{podd} is true then $y_{n + 2 k - 1}$ are repelling
  fixed-points for $Y_{n, m} (t)$.
\end{proof}

\begin{note}
  If Propositions \ref{podd} and \ref{peven} are true then $Y_{n, m} (s_n)$
  will never converge to $y_q$ with $q$ odd and $n$ even nor to $y_r$ with $r$
  even and $n$ odd. It suffices to prove that $Y_{n, m} (s_n) < y_{n + 1}
  \forall n, m \in \mathbbm{Z}^+$ which would mean that $Y_{n, m} (s_n)$ can
  never jump across the repelling fixed-point at $y_{n + 1}$ to land on any of
  the attractive fixed-points in $\bigcup_{k = 1}^{\infty} y_{n + 2 k}$
\end{note}

\begin{lemma}
  Let
  \begin{equation}
    Y^+_{n, m} (t) = \left\{ \begin{array}{ll}
      t & m = 0\\
      t + h_{n, m} \cos (\pi n) \tanh \left( \frac{Z (Y_{n, m - 1} (t))}{|
      \Omega (t) | \prod_{k = 1}^{n - 1} \tanh (Y_{n, m - 1} (t) - y_k)}
      \right) & m \geqslant 1
    \end{array} \right.
  \end{equation}
  \begin{equation}
    Y^-_{n, m} (t) = \left\{ \begin{array}{ll}
      t & m = 0\\
      t - h_{n, m} \cos (\pi n) \tanh \left( \frac{Z (Y_{n, m - 1} (t))}{|
      \Omega (t) | \prod_{k = 1}^{n - 1} \tanh (Y_{n, m - 1} (t) - y_k)}
      \right) & m \geqslant 1
    \end{array} \right.
  \end{equation}
  and
  \begin{equation}
    z_n = \min (\lim_{m \rightarrow \infty} Y^+_{n, m} (t), \lim_{m
    \rightarrow \infty} Y^-_{n, m} (t))
  \end{equation}
  which must exist because there is known to be an infinity of zeros on the
  critical line. 
\end{lemma}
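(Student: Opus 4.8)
The plan is to establish that the two limits appearing in the definition of $z_n$ both exist as finite real numbers, from which the existence of their minimum follows immediately. First I would observe that $Y^+_{n,m}$ and $Y^-_{n,m}$ differ from $Y_{n,m}$ only in the sign attached to the increment at each step. The bound on the Lipschitz constant established in Theorem \ref{cm} rests entirely on the facts that $|\tanh(\cdot)| \leqslant 1$, that $\Omega(t) > 1$, and that $h_{n,m} \leqslant 1$, none of which is disturbed by flipping the sign of the increment. Hence both $Y^+_{n,m}$ and $Y^-_{n,m}$ inherit the contraction property, with Lipschitz constant $M < 1$, for every $t > e$.

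Next I would invoke the Banach Fixed-Point Theorem separately for each of the two maps. Since each is a contraction carrying a suitable closed interval into itself, the iterates $Y^+_{n,m}(t)$ and $Y^-_{n,m}(t)$ form Cauchy sequences and converge to unique fixed-points as $m \to \infty$. By the lemma identifying the roots of $Z$ with the fixed-points of the iteration, the two limits $\lim_{m\to\infty} Y^+_{n,m}(t)$ and $\lim_{m\to\infty} Y^-_{n,m}(t)$ are each a zero of $Z$ on the real line.

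The remaining point is to rule out the possibility that a directional iteration escapes to infinity without ever meeting a root. This is exactly where the infinitude of zeros is used: because $Z$ is known to possess infinitely many real zeros, equivalently $\zeta$ has infinitely many zeros on the critical line, each of the two directions of iteration is guaranteed a target root to converge to, regardless of how large $n$ is taken. Consequently both limits are finite, and $z_n = \min(\lim_{m\to\infty} Y^+_{n,m}(t), \lim_{m\to\infty} Y^-_{n,m}(t))$ is well-defined.

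The step I expect to be the main obstacle is the bracketing required to apply the Banach theorem rigorously: for each $n$ one must exhibit a closed interval that the directional map genuinely maps into itself and which contains the relevant root. The infinitude of zeros supplies the existence of such roots, and the single-step bound $|h_{n,m}\cos(\pi n)\tanh(\cdot)| \leqslant 1$ controls how far one iterate can travel, but combining these into an invariant interval — so that the iteration does not overshoot past the nearest root in the chosen direction — requires the monotonicity of the left-hand side of the asymptotic equation established in Theorem \ref{le}.
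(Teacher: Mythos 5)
Your route is genuinely different from the paper's. The paper does not invoke the Banach theorem or the infinitude of critical-line zeros at all: it argues by contradiction that $z_n$ could fail to exist only if every root $y_k$ with $k > n$ were an indifferent fixed-point of $Y_{n, m}$, and then rules this out on the grounds that indifference would force $\left| \tanh \left( \frac{Z (Y_{n, m - 1} (t))}{| \Omega (t) |} \right) \right| = 1$, hence $| Z | = \infty$, which occurs only at the complex point $t = - \frac{i}{2}$. Your argument via the contraction property plus Hardy's theorem on the infinitude of real zeros of $Z$ is closer to a standard dynamical argument, and it at least makes explicit where the hypothesis about infinitely many zeros enters, which the paper's statement asserts but its proof never really uses.

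However, there is a genuine gap, and you have correctly located it yourself: the Banach Fixed-Point Theorem as stated in the paper requires a closed interval $[a, b]$ with $f ([a, b]) \subseteq [a, b]$, and you never produce such an interval for $Y^{\pm}_{n, m}$. The closing appeal to Theorem \ref{le} does not repair this: that theorem concerns monotonicity of $\frac{t}{2 \pi} \ln \left( \frac{t}{2 \pi e} \right) + S (t)$, i.e.\ of the zero-counting function, and says nothing about whether the orbit of $Y^{\pm}_{n, m}$ stays on one side of the nearest root; the single-step bound $| h_{n, m} \tanh (\cdot) | \leqslant 1$ permits an overshoot of up to $1$, which can exceed the gap to the next zero. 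There is also a structural tension you should confront: the earlier lemma makes \emph{every} root of $Z$ a fixed-point of $Y_{n, m}$, so no interval containing two or more roots can support a genuine contraction (Banach would then force uniqueness of the fixed-point), which means your invariant interval must isolate a single root --- precisely the bracketing you have not supplied. So your proposal identifies the right obstruction but does not overcome it; to be fair, the paper's own proof is also incomplete, since it never explains why failure of the two limits to exist would require all roots $y_k$ with $k > n$ to be indifferent fixed-points in the first place.
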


\begin{proof}
  The only way $z_n$ would not exist is if all the roots $y_k$ were
  indifferent fixed-points$\forall k > n$ but that is impossible since there
  are no indifferent fixed-points of $Y_{n, m} (t)$ because for a fixed-point
  $y_k$ to be indifferent would require $\tanh \left( \frac{Z (Y_{n, m - 1}
  (t))}{| \Omega (t) |} \right) = 1$ which is only possible if $| Z (Y_{n, m -
  1} (t)) | = \infty$ for some $m \in \mathbbm{Z}^+$ and the $Z$ function only
  takes on the value $\infty$ when $t = - \frac{i}{2}$ which corresponds to
  the pole at $\zeta (1)$ since $\frac{1}{2} + i \left( - \frac{i}{2} \right)
  = 1$.
\end{proof}

\begin{definition}
  The \tmverbatim{multiplicity} $m_f (t)$ of a root $\alpha$ is a root $f
  (\alpha) = 0$ such that its Taylor expansion about the point $\alpha$ has
  the form
  \begin{equation}
    f (t) = c (t - \alpha)^{m_f (t)} + (\tmop{higher} \tmop{order}
    \tmop{terms} \ldots)
  \end{equation}
  where $c \neq 0$ and $m \geqslant 1$. The multiplicity of a root $t$ is
  related to the multipler $\lambda_f (t)$ through the formula
  \begin{equation}
    m_f (t) = \frac{1}{1 - \lambda_{N_f} (t)}
  \end{equation}
\end{definition}

where
\begin{equation}
  \lambda_{N_f} (t) = \frac{f (t) \ddot{f} (t)}{\dot{f} (t)}
\end{equation}
is the first derivative of the Newton map of $f (t)$
\begin{equation}
  N_f (t) = t - \frac{f (t)}{\dot{f} (t)}
\end{equation}

\begin{lemma}
  (Milnor's Lemma) Every \tmverbatim{simple root} of $f (t)$ is a
  super-attractive fixed-point of $N_f (t)$ since a superattractive
  fixed-point is one such that its multiplier $\lambda_f (t) = 0$ so that its
  multiplicity is
  \begin{equation}
    m_f (t) = \frac{1}{1 - \lambda_{N_f} (t)} = \frac{1}{1 - 0} = \frac{1}{1}
    = 1
  \end{equation}
  See {\cite[p.52]{milnor2006dynamics}}
\end{lemma}

\begin{proof}
  Let $\alpha$ be a root $Z (\alpha) = 0$ then the multiplier of its Newton
  map is $\lambda_{N_Z} (\alpha) = \lambda_f (\alpha) = \frac{Z (\alpha)
  \ddot{Z} (\alpha)}{\dot{Z} (\alpha)} = 0$ since $Z (\alpha) = 0$ the entire
  expression $\frac{Z (\alpha) \ddot{Z} (\alpha)}{\dot{Z} (\alpha)}$ is equal
  to 0 since due to the ordering of operations the value of $\dot{Z} (t)$ or
  $\ddot{Z} (t)$ is never required to be known in order to know the value of
  $\lambda_{N_Z} (t)$ when $Z (t) = 0$.\quad If any term in the product is 0
  then the entire product takes the value 0. The multiplicity is related to
  the multiplier by $m_Z (t) = \frac{1}{1 - \lambda_{N_Z} (t)} = 1$ and
  therefore simple. Since \ $m_Z (t) = \frac{1}{1 - \lambda_{N_Z} (t)} \forall
  \lambda_{N_Z} (t) \neq 1$then it is known that $\lambda_{N_Z} (t) = 0$ when
  $Z (t_{}) = 0$ \ therefore the point $\alpha$ is a superattractive
  fixed-point corresponding to a simple zero at $\alpha$. Since we now know
  that $m_Z = (\alpha)$ and therefore the zero at $Z (\alpha) = 0$ is simple,
  we therefore know that the denominator $\dot{\Zeta} (t)$ of the multiplier
  $\lambda_{N_Z} (t)$ cannot vanish so that \ \ $\dot{\Zeta} (\alpha) \neq 0$
  since that would imply that $\alpha$ is not a simple root, which would be a
  contradiction to the already established fact that $m_Z (\alpha) = 1$ when
  $Z (\alpha) = 0$.
\end{proof}

\begin{conjecture}
  The roots generated by the sequence $y_n = \lim_{m \rightarrow \infty} Y_{n,
  m} (t)$ are simple
\end{conjecture}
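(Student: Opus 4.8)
The plan is to reduce the conjecture to the multiplicity analysis already carried out in Milnor's Lemma, using Proposition \ref{p1} to identify each limit point as an honest zero of $Z$. First I would invoke Proposition \ref{p1} to assert that for each $n \in \mathbbm{Z}^+$ the limit $y_n = \lim_{m \to \infty} Y_{n, m} (s_n)$ exists and satisfies $Z (y_n) = 0$; this is the only input needed from the contraction-mapping apparatus, since simplicity is a purely local property of $Z$ at $y_n$ and is insensitive to how the point was produced. In particular the earlier theorems guaranteeing that the sequence avoids the indifferent fixed-points $y_k$ with $k < n$ and the repelling fixed-points ensure that the limit is a \emph{new} zero, so the points $y_n$ exhaust the zeros rather than collapsing onto earlier ones.

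Next I would compute the Newton-map multiplier at the limit point. Writing the multiplicity formula $m_Z (t) = 1 / (1 - \lambda_{N_Z} (t))$ together with the derivative of the Newton map $\lambda_{N_Z} (t) = Z (t) \ddot{Z} (t) / \dot{Z} (t)^2$, one sees that the numerator carries an explicit factor of $Z (t)$. Substituting $Z (y_n) = 0$ formally gives $\lambda_{N_Z} (y_n) = 0$ and hence $m_Z (y_n) = 1 / (1 - 0) = 1$, which is exactly the assertion that $y_n$ is simple. This is precisely the argument of Milnor's Lemma, now specialized to the concrete points $y_n$ rather than to an abstract root $\alpha$.

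The hard part, and the place where this formal substitution is not yet a proof, is justifying that $\lambda_{N_Z} (y_n)$ is a genuine zero rather than an indeterminate form. If $y_n$ were a zero of multiplicity $\mu \geqslant 2$, then $\dot{Z} (y_n) = 0$ as well, and $\lambda_{N_Z}$ becomes $0/0$; the correct value, obtained from the Taylor expansion $Z (t) = c (t - y_n)^{\mu} + \cdots$ with $c \neq 0$, is $\lambda_{N_Z} (y_n) = 1 - 1/\mu$, which returns $m_Z (y_n) = \mu$ and does \emph{not} vanish. Thus the evaluation $\lambda_{N_Z} (y_n) = 0$ silently presupposes $\dot{Z} (y_n) \neq 0$, and the real content of the conjecture is the non-vanishing of $\dot{Z}$ at each generated root.

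To close this gap I would attempt to establish $\dot{Z} (y_n) \neq 0$ directly. The natural route is through the factorization $Z (t) = e^{i \vartheta (t)} \zeta \left( \frac{1}{2} + i t \right)$: since $e^{i \vartheta (t)}$ never vanishes, a multiple zero of $Z$ on the real line forces a multiple zero of $\zeta$ on the critical line, so I would try to exclude the latter using the simple-pole structure of the logarithmic derivative $\dot{\zeta}/\zeta$ (whose residue at a zero equals its multiplicity) or the Hadamard product, possibly supplemented by explicit separation bounds on consecutive $y_k$. I expect this to be the genuine obstacle, since the simplicity of the zeros of $\zeta$ is itself an open problem; the iteration $Y_{n, m}$ certifies convergence to a zero but not its order, so any honest proof must import an independent argument for $\dot{Z} (y_n) \neq 0$ rather than rely on the formal substitution into the multiplier formula.
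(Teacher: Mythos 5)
The statement you were asked to prove is left as a conjecture in the paper---there is no proof of it there to compare against---and your proposal, by your own account, does not prove it either; so the honest verdict is that a genuine gap remains, but your diagnosis of where it lies is correct and worth spelling out. The formal computation $\lambda_{N_Z} (y_n) = Z (y_n) \ddot{Z} (y_n) / \dot{Z} (y_n)^2 = 0$ is exactly the argument the paper deploys in its ``proof'' of Milnor's Lemma immediately above the conjecture, and you are right that it is circular: at a zero of multiplicity $\mu \geqslant 2$ the expression is the indeterminate form $0 / 0$, and the Taylor expansion $Z (t) = c (t - y_n)^{\mu} + \cdots$ gives the true value $\lambda_{N_Z} (y_n) = 1 - 1 / \mu$, so concluding $\lambda_{N_Z} (y_n) = 0$ from $Z (y_n) = 0$ by ``order of operations'' presupposes the very simplicity one is trying to establish. (You also silently corrected the paper's formula, which omits the square on $\dot{Z}$ in the denominator of $\lambda_{N_f}$.)

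The residual obligation you identify---$\dot{Z} (y_n) \neq 0$, equivalently the simplicity of the corresponding zero of $\zeta$ on the critical line---is indeed the entire content of the conjecture, and none of the routes you sketch (the logarithmic derivative, the Hadamard product, separation bounds on consecutive $y_k$) is known to close it: simplicity of the nontrivial zeros is an open problem, not implied by the Riemann hypothesis. Nothing in the contraction-mapping machinery helps, because the iteration $Y_{n, m}$ converges to a zero whatever its order; Proposition \ref{p1} certifies $Z (y_n) = 0$ and nothing more, as you note. So your proposal should be read not as a proof but as a correct explanation of why the statement is, and must remain, a conjecture---which is in fact more than the paper offers, since its adjacent ``proof'' of Milnor's Lemma asserts the fallacious substitution as though it were valid.
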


\begin{conjecture}
  Let
  \begin{equation}
    c_n (\varepsilon) = \frac{Z (\max_{t \in [0, y_n]} \{ Y_{n + 1, 1} (t)
    \geqslant t \} + \epsilon) - Z (\min_{t \in [y_n, \infty]} \{ Y_{n + 1, 1}
    (t) \leqslant t \} - \epsilon)}{2 \varepsilon + \max_{t \in [0, y_n]} \{
    Y_{n + 1, 1} (t) \geqslant t \} - \min_{t \in [y_n, \infty]} \{ Y_{n + 1,
    1} (t) \leqslant t \}}
  \end{equation}
  denote the Lipschitz constant in Formula \ref{lc} then it is always possible
  to choose a small enough positive $\varepsilon$ such that $0 < c_n
  (\varepsilon) < 1$.
\end{conjecture}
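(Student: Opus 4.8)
\noindent The plan is to read $c_n(\varepsilon)$ as a finite-difference (secant) slope of the Hardy $Z$ function and to bound it by letting $\varepsilon\to 0^+$. Abbreviating $a_n=\max_{t\in[0,y_n]}\{Y_{n+1,1}(t)\geqslant t\}$ and $b_n=\min_{t\in[y_n,\infty]}\{Y_{n+1,1}(t)\leqslant t\}$, the definition reads
\begin{equation}
  c_n(\varepsilon)=\frac{Z(a_n+\varepsilon)-Z(b_n-\varepsilon)}{(a_n+\varepsilon)-(b_n-\varepsilon)},
\end{equation}
which is exactly the slope of the secant line of $Z$ through the abscissae $b_n-\varepsilon$ and $a_n+\varepsilon$. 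First I would locate the transition points $a_n$ and $b_n$, then evaluate $Z$ there, and finally let $\varepsilon$ shrink.

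\noindent To locate $a_n$ and $b_n$, write the increment as $Y_{n+1,1}(t)-t=h_{n+1,1}\cos(\pi(n+1))\tanh\!\left(Z(t)/(|\Omega(t)|\prod_{k=1}^{n}\tanh(t-y_k))\right)$, whose sign equals $\cos(\pi(n+1))\,\tmop{sign}(Z(t))\,\tmop{sign}\prod_{k=1}^{n}\tanh(t-y_k)$. On an interval $(y_j,y_{j+1})$ with $j\leqslant n$ the product contributes the sign $(-1)^{n-j}$, while $\tmop{sign}(Z)$ alternates at the simple zeros, starting from $Z<0$ on $(0,y_1)$ because $Z(0)=\zeta(\tfrac{1}{2})<0$. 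A short bookkeeping of these signs shows that the increment keeps one constant sign throughout $[0,y_{n+1})$ and reverses exactly at the target root $y_{n+1}$; hence $[0,y_n]\subseteq\{t:Y_{n+1,1}(t)\geqslant t\}$ and the least point of $[y_n,\infty)$ at which $Y_{n+1,1}(t)\leqslant t$ is $y_{n+1}$ itself. Therefore $a_n=y_n$ and $b_n=y_{n+1}$, so in particular $Z(a_n)=Z(b_n)=0$.

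\noindent With $a_n=y_n$ and $b_n=y_{n+1}$ the numerator $Z(y_n+\varepsilon)-Z(y_{n+1}-\varepsilon)$ tends to $0$ as $\varepsilon\to 0^+$, since $Z$ vanishes at both roots, whereas the denominator tends to $-(y_{n+1}-y_n)\neq 0$ because the two zeros are distinct. Consequently $\lim_{\varepsilon\to 0^+}c_n(\varepsilon)=0$. As $Z$ is real-analytic, the map $\varepsilon\mapsto c_n(\varepsilon)$ is continuous on $\bigl(0,\tfrac{1}{2}(y_{n+1}-y_n)\bigr)$, so every sufficiently small $\varepsilon>0$ gives $|c_n(\varepsilon)|<1$; discarding the isolated values of $\varepsilon$ at which $Z(y_n+\varepsilon)=Z(y_{n+1}-\varepsilon)$ then yields $0<|c_n(\varepsilon)|<1$, the inequality demanded by the contraction condition (\ref{lc}).

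\noindent The hard part will be the second step: certifying that the extremal points defining $a_n$ and $b_n$ are attained precisely at the consecutive roots $y_n$ and $y_{n+1}$. This rests on the reality and simplicity of all of $y_1,\ldots,y_{n+1}$ --- exactly the structural fact the paper is trying to secure --- and on excluding spurious sign reversals of the increment near the lower roots $y_k$ with $k<n$, where both $Z(t)$ and $\tanh(t-y_k)$ vanish and the quotient must be resolved through its finite limit $Z'(y_k)/\prod_{j\neq k}\tanh(y_k-y_j)$. A further subtlety is that the limiting value is $0$ rather than a strictly positive number, so strict positivity must be obtained by avoiding the discrete zero set of $\varepsilon\mapsto Z(y_n+\varepsilon)-Z(y_{n+1}-\varepsilon)$; and if the intended $c_n(\varepsilon)$ carries no absolute value in its numerator, then positivity reduces further to the sign comparison $\tmop{sign}(Z|_{(y_n,y_{n+1})})\,(|Z'(y_n)|-|Z'(y_{n+1})|)<0$, which is not automatic and would require finer control of the slopes of $Z$ at adjacent zeros.
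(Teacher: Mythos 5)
The paper offers no proof of this statement---it is explicitly labelled a conjecture and left open---so there is nothing to compare your argument against; it must stand on its own, and as written it does not close. Your reading of $c_n (\varepsilon)$ as the secant slope of $Z$ between the abscissae $b_n - \varepsilon$ and $a_n + \varepsilon$ is the natural one, and the identification $a_n = y_n$, $b_n = y_{n + 1}$ is plausible. But the core of your argument---letting $\varepsilon \rightarrow 0^+$ so that the numerator vanishes while the denominator tends to $y_n - y_{n + 1} \neq 0$---drives $c_n (\varepsilon)$ to $0$, which is the \emph{boundary} of the required open interval $(0, 1)$, not its interior. A limit of $0$ yields $| c_n (\varepsilon) | < 1$ for small $\varepsilon$ but says nothing about the sign, and the sign is exactly where the conjecture has content. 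Your own first-order expansion exposes the problem: with $Z$ of one sign on $(y_n, y_{n + 1})$, the numerator is $\varepsilon \left( Z' (y_n) + Z' (y_{n + 1}) \right) + O (\varepsilon^2)$ while the denominator is negative, so $c_n (\varepsilon) > 0$ for small $\varepsilon$ precisely when $| Z' (y_n) | < | Z' (y_{n + 1}) |$. There is no reason for the slopes of $Z$ at consecutive zeros to increase in magnitude---they fluctuate---so for infinitely many $n$ one expects $c_n (\varepsilon) \rightarrow 0^-$, in which case your construction produces no admissible $\varepsilon$ in the small-$\varepsilon$ regime at all. You flag this in your final sentence, but flagging it is not resolving it: it is the entire difficulty, and ``discarding the isolated values of $\varepsilon$'' where the numerator vanishes cannot repair a wrong sign holding on a whole interval.

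Two further gaps. The ``short bookkeeping'' locating $a_n = y_n$ and $b_n = y_{n + 1}$ is never actually carried out, and it presupposes that $y_1, \ldots, y_{n + 1}$ are real, simple and correctly ordered---which in this paper is itself only conjectured (see the simplicity conjecture immediately preceding this one)---and it also requires resolving the $0 / 0$ form of the iteration increment at each lower root $y_k$ with $k \leqslant n$, where both $Z (t)$ and $\tanh (t - y_k)$ vanish; a removable singularity resolving with the wrong limiting sign would displace $a_n$ or $b_n$ away from the roots and destroy the vanishing of the numerator on which your whole limit rests. Finally, even granting all of this, the contraction condition (\ref{lc}) is a bound over \emph{all} pairs of points in an interval, whereas $c_n (\varepsilon)$ is a single difference quotient; establishing $0 < c_n (\varepsilon) < 1$ for one pair of endpoints does not bound the Lipschitz constant of the map on the interval, so the bridge from this statement back to the Banach fixed-point lemma would still need to be built.
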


\section{Appendix}

\subsection{Definitions}

Let $\zeta (t)$ be the Riemann zeta function
\begin{equation}
  \begin{array}{lll}
    \zeta (t) & = \sum_{n = 1}^{\infty} n^{- s} & \forall \tmop{Re} (s) > 1\\
    & = (1 - 2^{1 - s}) \sum_{n = 1}^{\infty} n^{- s} (- 1)^{n - 1} & \forall
    \tmop{Re} (s) > 0
  \end{array}
\end{equation}
and $\vartheta (t)$ be Riemann-Siegel vartheta function

\begin{equation}
  \vartheta (t) = - \frac{i}{2} \left( \ln \Gamma \left( \frac{1}{4} + \frac{i
  t}{2} \right) - \ln \Gamma \left( \frac{1}{4} - \frac{i t}{2} \right)
  \right) - \frac{\ln (\pi) t}{2} \label{vartheta}
\end{equation}
so that the Hardy $Z$ function{\cite{HardyZ}} can be defined by
\begin{equation}
  \begin{array}{ll}
    Z (t) & = e^{i \vartheta (t)} \zeta \left( \frac{1}{2} + i t \right)
  \end{array} \label{Z}
\end{equation}
which is real-valued when $t$ is real and satisfies the identity
\begin{equation}
  \zeta (t) = e^{- i \vartheta \left( \frac{i}{2} - i t \right)} Z \left(
  \frac{i}{2} - i t \right) \label{Zz}
\end{equation}

where $\ln \Gamma (z)$ is the principal branch of the logarithm of the
$\Gamma$ function defined by
\begin{equation}
  \ln \Gamma (z) = \ln (\Gamma (z)) = (z - 1) ! = \prod_{k = 1}^{z - 1} k
  \forall z \in \mathbbm{R}> 0
\end{equation}
which is analytically continued from the positive real axis when $z \in
\mathbbm{C}$ is complex. Each of the points $z \in \mathbbm{Z}= \{ 0, - 1, -
2, \ldots \}$ is a singularity and a branch point so that the union of the
branch cuts is the negative real axis. On the branch cuts, the values of $\ln
\Gamma (z)$ are determined by continuity from above. Let $S (t)$ denote the
normalized argument of $\zeta (t)$ on the critical line
\begin{equation}
  \begin{array}{ll}
    S (t) & = \pi^{- 1} \arg \left( \zeta \left( \frac{1}{2} + i t \right)
    \right)\\
    & = - \frac{i}{2 \pi} \left( \ln \zeta \left( \frac{1}{2} + i t \right) -
    \ln \zeta \left( \frac{1}{2} - i t \right) \right)\\
    & = \frac{1}{\pi} \lim_{\varepsilon \rightarrow 0} \tmop{Im} \left( \ln
    \zeta \left( \frac{1}{2} + i t + \varepsilon \right) \right)
  \end{array} \label{S}
\end{equation}
\begin{definition}
  The \tmverbatim{critical line} is the line in the complex plane defined by
  $\tmop{Re} (t) = \frac{1}{2}$.
\end{definition}

\begin{definition}
  The \tmverbatim{critical strip} is the strip in the complex plane defined by
  $0 < \tmop{Re} (t) < 1$.
\end{definition}

\end{document}